\documentclass[12pt, twoside]{article}
\usepackage{amsmath,amsxtra,amssymb,latexsym, amscd,amsthm,pb-diagram,fancyhdr,euscript}
\usepackage{amsthm}
\usepackage{latexsym}
\usepackage{amssymb}
\usepackage{amsmath}
\usepackage{indentfirst}
\usepackage{hyperref}
\usepackage{mathrsfs}
\usepackage{array}
\usepackage{euscript}
\usepackage{slashed}
\usepackage[T1]{fontenc}
\usepackage{graphicx}
\usepackage[utf8]{inputenc}
\usepackage[french,english]{babel}
\usepackage{multicol}
\hypersetup{
   colorlinks,
    citecolor=blue,
    filecolor=black,
    linkcolor=blue,
    urlcolor=magenta
}
\hypersetup{linktocpage}

\setlength{\textwidth}{16.5cm}\textheight=21.5cm\oddsidemargin=0cm\evensidemargin=0cm

\newtheorem{theorem}{Theorem}[section]
\newtheorem{lemma}[theorem]{Lemma}

\newtheorem{definition}[theorem]{Definition}

\newtheorem{remark}[theorem]{Remark}

\newtheorem{assumption}[theorem]{Assumption}

\newcommand{\norm}[1]{\left\Vert#1\right\Vert}

\numberwithin{equation}{section}
\usepackage{amsfonts}

\usepackage{amsmath}
\usepackage{amssymb}


\def\dive{\mathrm{div}}

\def\geq{\geqslant} 
\def\leq{\leqslant}

\def\r{\hro{R}}

\def\hro{\mathbb}

\def\C+{C_+([t_0,\infty))}

\topmargin0cm
\headheight-1cm
\headsep1cm
\oddsidemargin0pt
\evensidemargin0pt
\textheight21.5cm
\textwidth16.5cm

\begin{document}\mbox{}
\vspace{0.25in}

\begin{center}

{\huge{\bf Well-posedness and stability for a class of solutions of semi-linear diffusion equations with rough coefficients}}

\vspace{0.25in}

\large{ {\bf Le The Sac}\footnote{Faculty of Computer Science and Engineering, Thuyloi University, 175 Tay Son, Dong Da, Hanoi, Vietnam. Enail: SacLT@tlu.edu.vn} and
{\bf Pham Truong Xuan}\footnote{Thang Long Institute of Mathematics and Applied Sciences (TIMAS), Thang Long University, Nghiem Xuan Yem, Hanoi, Vietnam. Email: phamtruongxuan.k5@gmail.com or xuanpt@thanglong.edu.vn} 
} 

\end{center}

\begin{abstract}  
In this work we study the existence, uniqueness and polynomial stability of the pseudo almost periodic mild solutions of semi-linear diffusion equations with rough coefficients in certain interpolation spaces. First, we rewrite the equations in abstract parabolic equation. Then, we use the polynomial stability of the semigroups of the corresponding linear equations to prove the boundedness of the solution operator for the linear equations in appropriate interpolation spaces. We show that this operator preserves the pseudo almost periodic property of functions. We will use the fixed point argument to obtain the existence and stability of the pseudo almost periodic mild solutions for the semi-linear equations. The abstract results will be applied to the semi-linear diffusion equations with rough coefficients to obtain our desired results. 
\end{abstract}
{\bf 2020 Mathematics subject classification.}{ Primary 35K91, 46M35, 46B70; Secondary 34K14, 35B15, 35B35.}\\
{\bf Keywords.}{ Linear and semi-linear parabolic equation, Almost periodic function (solution), Pseudo almost periodic function (solution), Interpolation space, Stability.}

\tableofcontents
\section{Introduction and Preliminaries}
\subsection{Introduction}
The study of mild solutions of difference and differential equations has been the center of studies of many mathematicians. Especially, topics related to the existence, uniqueness and asymptotic behaviours of periodic, almost periodic, pseudo almost periodic mild solutions and their generalizations. These solutions and their properties have significant applications in many areas such as physics, mathematical biology, control theory, and others (see for examples \cite{Ch,DuHu,HZha}).
Historically, the notion of pseudo almost periodic functions was introduced initially by Zhang (see \cite{Zh1,Zh2}). Then, intensive studies of this concept of solution and its generalizations to differential  and difference equations have been made during recent years (see for examples \cite{Di2006,Di2007,EzFaGu,ZL} and the references therein). All of these works consider the evolution equations where the corresponding semigroups are both exponential stable. 

In the present paper, we will investigate the existence, uniqueness and stability of the pseudo almost periodic mild solutions to 
the semi-linear diffusion equations with rough coefficients
\begin{equation}\label{Rou11}
u'(t) -  b\Delta u(t) = g(t,u), \, \, \, (t,x)\in \mathbb{R}\times \mathbb{R}^d,
\end{equation}
where $b : \mathbb{R}^d \to \mathbb{C}$ is a  measurable function satisfying $b\in L^\infty(\mathbb{R}^d)$ and $\mathrm{Re}\,\, b \geqslant \delta > 0$ for some $\delta > 0$, $g(t,u) = |u(t)|^{m-1}u(t) + F(t)$ for some fixed $m \in \mathbb{N}$ and $F$ is a given bounded function on $\mathbb{R}$.

We know that the operator $-A$ defined on $L^p(\mathbb{R}^d)$ by $Au := -b\Delta u$ generates a bounded analytic semigroup (also called ultracontractive semigroup) $T(t):= e^{-tA}$ on $L^p(\mathbb{R}^d)$ for all $1<p<\infty$ (for more details see \cite[Section 7.3.2]{Are04}) such that 
$$(T(t)f)(x) = \int_{\mathbb{R}^d}K(t,x,y)f(y)dy, \, \, \, t>0 \hbox{  and a.e  } x,y \in \mathbb{R}^d,$$
where $K(t,x,y)$ is the heat kernel. The semi-linear equation \eqref{Rou11} can be rewritten as
\begin{equation}\label{Rou22}
u'(t) + Au(t) = g(t,u), \, \, \, (t,x)\in \mathbb{R}\times\mathbb{R}^d.
\end{equation}
The corresponding linear equation is
\begin{equation}\label{LRou1}
u'(t) + Au(t) = F(t), \,\,\, (t,x) \in \mathbb{R}\times \mathbb{R}^d.
\end{equation}
In general, we consider these problems  on the interpolation spaces to a large class of semi-linear evolution equations of the form
\begin{equation}\label{abstract}
u'(t)+Au(t)=BG(u)(t), \, t\in \mathbb{R},
\end{equation}
where $-A$ is the generator of a $C_0$-semigroup $(e^{-tA})_{t\geq 0}$ on some interpolation spaces and $B$ plays the role of a ''connection'' operator between the various spaces involved. Note that, we have $B=\mathrm{Id}$ in the case of equation \eqref{Rou11}.

One of the important features in our strategy is Assumption \ref{A} on the polynomial estimates of the operator $e^{-tA}B\,\, (t>0)$. Equations of type \eqref{abstract} associated with $e^{-tA}$ and $B$ satisfying these estimates occur in many situations such as the equations of fluid dynamic equations and various diffusion equations with rough coefficients (see \cite{GeHiHu,HiHuySe,HuyHaXuan}).

The novelty and difficultly in our study appear from the fact that we allow the zero number to belong to the spectrum $\sigma(A)$. This leads to the problem that the semigroup $(e^{-tA})_{t\geq 0}$ is no longer exponential stable. However, the polynomial estimates of $e^{-tA}B\, (t>0)$ are still sufficiently good to allow us to handle the corresponding linear equation
\begin{equation}\label{linearE}
u'(t)+Au(t)=Bf(t), \, t\in \mathbb{R},
\end{equation}
where $f$ is a pseudo almost periodic (PAP-) function (see Definition \ref{WPAAfunction} for the notion of PAP-functions). Using the polynomial estimates in Assumption \ref{A} of $e^{-tA}B\,\, (t>0)$, we can
construct the interpolation spaces and then apply the interpolation theorem to obtain the
boundedness of the solution operator on the spaces of PAP-functions.
Namely, we will prove that if $f$ is a PAP-function, then the corresponding
mild solution $u(t)$ of \eqref{linearE} is also PAP-function (see Theorem \ref{thm1}).
Then, we use fixed point argument to extend this result to the semi-linear equation \eqref{abstract} under an assumption that the Nemytskii operator $G$ is a locally Lipschitz continuous operator that
maps PAP-functions into PAP-functions (see Assumption \ref{GAA}). Consequently, we obtain the existence and uniqueness of the pseudo almost periodic mild solution in the PAP-space to \eqref{abstract} (see Theorem \ref{thm2}). Moreover, the interpolation spaces also allow us to prove the polynomial stability of such pseudo almost periodic mild solutions under Assumption \ref{assum} of the operators $e^{-tA},\, B$ and Nemytskii operator $G$ (see Theorem \ref{Stabilitytheorem}). Finally, we apply our abstract results to  the semi-linear diffusion equations with rough coefficients (see Theorem \ref{app}).

This paper is organized as follows: Section \ref{Section 2} contains the setting of linear parabolic equations in interpolation spaces and the results of the existence and uniqueness of pseudo
almost periodic mild solutions to these equations; in Section \ref{Section 3} we investigate the semi-linear equations: the existence, the uniqueness and the stability of pseudo
almost periodic mild solutions. Lastly, Section \ref{Section 4} give the application of abstract results to heat equations with rough coefficients \eqref{Rou11}.\\
{\bf Notation.}\\
$\bullet$ We denote the norm on Banach space $X$ by $\norm{.}_{X}$ and the supremum norm on the Banach space $BC(\r,X)$ by $\norm{.}_{\infty,X}$.\\
$\bullet$ Let $Y$ be an interpolation space we use the notation $BC(\mathbb{R}, Y)$ for the space of all time weak-continuous functions $f \in L^\infty(\mathbb{R},Y)$.
\subsection{Preliminaries}
We recall the notions of almost periodic (AP-), pseudo almost periodic (PAP-) functions
as follows:
\begin{definition}\label{1}(AP-function)
For a Banach space $X$, a continuous function $f: \mathbb{R} \to X$ is called (Bohr) almost periodic if for each $\varepsilon > 0$ there exists $l(\varepsilon) > 0$ such
that every interval of length $l(\varepsilon)$ contains a number $T$ with the property that
\begin{equation*}
\norm{f(t+T)-f(t)}_X <\varepsilon \hbox{  for each  } t\in \mathbb{R}.
\end{equation*}
The number $T$ above is called an $\varepsilon$-translation number of $f$.
We denote the set of all almost periodic functions $f:\mathbb{R} \to X$ by $AP(\mathbb{R}, X).$
\end{definition}
Note that $(AP(\r,X), \norm{.}_{\infty,X})$ is a Banach space, where $\norm{.}_{\infty,X}$ is the supremum norm (see \cite[Theorem 3.36]{Dia}). The properties of the almost periodic functions can be found in \cite{AH,Kos,Li}.
\begin{definition}\label{WPAAfunction}(PAP-function)
A continuous function $f: \mathbb{R} \rightarrow X$ is called pseudo almost periodic if it can be decomposed as $f = g + \phi$ where $g \in AP(\mathbb{R},X)$ and $\phi$ is a bounded continuous function with vanishing mean value i.e
$$\lim_{L\to \infty}\frac{1}{2L}\int_{-L}^L \norm{\phi(t)}_X dt =0.$$
We denote the set of all functions with vanishing mean value by $PAP_0(\mathbb{R},X)$
and the set of all the pseudo almost periodic (PAP-) functions by $PAP(\mathbb{R},X)$. 
\end{definition}
Note that $(PAP(\mathbb{R},X),\norm{.}_{\infty,X})$ is a Banach space, where $\norm{.}_{\infty,X}$ is the supremum norm (see \cite[Theorem 5.9]{Dia}).

We notice that the notion of pseudo almost periodic function is a generalisation of the almost periodic and asymptotically almost periodic ($AAP$-) functions (see the definition of $AAP$- functions in \cite[Section 3.3]{Dia}). Moreover, we have also the extension of this notion to the weighted pseudo almost periodic and weighted pseudo almost automorphic functions (see for examples \cite{ChaZhaoLiliu,Di2008,EzFaGu} and the references therein). We refer the reader to the books \cite{Di07} for more details about $PAP$-functions and $PAP$-spaces.

We recall the notion of Lorentz spaces as follows (see for example \cite{Ya}): for $1 < p < \infty$ and $1 \leq q \leq \infty$, we define the Lorentz space on $\Omega\subseteq \mathbb{R}^d$ as
$$L^{p,q}(\Omega) = \left\{ u \in L^1_{loc}(\Omega): \norm{u}_{p,q} <\infty \right\},$$
where
$$\norm{u}_{p,q} = \left( \int_0^\infty \left(s\mu \left(\left\{x\in \Omega: |u(x)|>s  \right\}\right)^{1/p}\right)^q \frac{ds}{s}  \right)^{1/q} \hbox{   for   } 1\leq q <\infty$$
and
$$\norm{u}_{p,\infty} = \sup_{s>0}s\mu(\left\{x\in \Omega: |u(x)|>s  \right\})^{1/p}.$$
Denote also by
$$L^{p,\infty}: = L^p_\omega \hbox{  called  weak-}L^p \hbox{  space}.$$
Using real-interpolation functor $(.,.)_{\theta,q}$ we have 
$$(L^{p_0}(\Omega),L^{p_1}(\Omega))_{\theta,q} = L^{p,q}(\Omega), \hbox{  where  } 1<p_0<p<p_1<\infty$$
and $0<\theta<1$ such that 
$$\frac{1}{p} = \frac{1-\theta}{p_0} + \frac{\theta}{p_1},\,\, 1\leq q \leq \infty.$$

We recall the following general interpolation theorem (see \cite[Theorem 3.11.2]{BeLo}):
\begin{theorem}\label{GIT} (General interpolation theorem) Let $(X_0, X_1)$ and $(Y_0, Y_1)$ be interpolation couples of quasinormed spaces. Let $T$ be defined on $X_0 + X_1$ such that $T : X_0 \to Y_0$ as well as $T : X_1 \to Y_1$ are
sublinear with quasi-norm $M_0$ and $M_1$, respectively. Then for any $\theta \in (0,\, 1)$ and $q \in [1,\infty]$ it holds that
$$T : (X_0, X_1)_{\theta,q} \to (Y_0, Y_1)_{\theta,q}$$
is sublinear with quasi-norm $M$ bounded by
$$M \leq M_0^{1-\theta}M_1^\theta.$$
\end{theorem}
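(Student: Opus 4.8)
The plan is to reduce everything to the Peetre $K$-functional description of the real interpolation spaces. Recall that for an interpolation couple $(Z_0,Z_1)$ one sets $K(t,z)=\inf\{\norm{z_0}_{Z_0}+t\norm{z_1}_{Z_1}: z=z_0+z_1\}$, and that for $1\le q<\infty$ the norm on $(Z_0,Z_1)_{\theta,q}$ is $\norm{z}_{\theta,q}=\big(\int_0^\infty (t^{-\theta}K(t,z))^q\,\tfrac{dt}{t}\big)^{1/q}$, with the obvious supremum when $q=\infty$. I would first fix $x\in (X_0,X_1)_{\theta,q}\subseteq X_0+X_1$ and an arbitrary admissible decomposition $x=x_0+x_1$ with $x_0\in X_0$, $x_1\in X_1$; the whole theorem then follows once we control $K(t,Tx;Y_0,Y_1)$ by $K(\cdot,x;X_0,X_1)$.

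The heart of the argument is a scaling inequality for $K(t,Tx;Y_0,Y_1)$. Since $T:X_i\to Y_i$ has quasi-norm $M_i$, we have $Tx_i\in Y_i$ with $\norm{Tx_i}_{Y_i}\le M_i\norm{x_i}_{X_i}$. For a \emph{linear} $T$ this immediately gives the splitting $Tx=Tx_0+Tx_1$ and hence $K(t,Tx)\le M_0\norm{x_0}_{X_0}+tM_1\norm{x_1}_{X_1}$. The one genuinely non-trivial point, and the step I expect to be the main obstacle, is to manufacture such a splitting of $Tx$ itself when $T$ is only \emph{sublinear}, since then $Tx\ne Tx_0+Tx_1$ in general. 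Here I would use the sublinearity hypothesis in its pointwise lattice form $|T(x_0+x_1)|\le |Tx_0|+|Tx_1|$ together with the (quasi-)lattice structure of the target couple: by the Riesz decomposition property one writes $Tx=w_0+w_1$ with $|w_i|\le |Tx_i|$, and monotonicity of the lattice (quasi-)norms gives $\norm{w_i}_{Y_i}\le\norm{Tx_i}_{Y_i}\le M_i\norm{x_i}_{X_i}$, recovering the same bound. Taking the infimum over all decompositions $x=x_0+x_1$ then yields
\[
K(t,Tx;Y_0,Y_1)\ \le\ M_0\,K\!\left(\tfrac{M_1}{M_0}\,t,\ x;\ X_0,X_1\right).
\]

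With this inequality in hand the rest is a change of variables. I would substitute $s=(M_1/M_0)t$ in the integral defining $\norm{Tx}_{(Y_0,Y_1)_{\theta,q}}$: since $\tfrac{dt}{t}=\tfrac{ds}{s}$ is scale-invariant and the weight transforms as $t^{-\theta}=(M_1/M_0)^{\theta}s^{-\theta}$, the constant $M_0\cdot(M_1/M_0)^{\theta}=M_0^{1-\theta}M_1^{\theta}$ factors out cleanly and one is left with exactly $\norm{x}_{(X_0,X_1)_{\theta,q}}$. The identical computation with the supremum in place of the integral handles $q=\infty$. This proves $\norm{Tx}_{(Y_0,Y_1)_{\theta,q}}\le M_0^{1-\theta}M_1^{\theta}\norm{x}_{(X_0,X_1)_{\theta,q}}$, i.e. the asserted bound $M\le M_0^{1-\theta}M_1^{\theta}$, while sublinearity of the interpolated map is inherited directly from that of $T$ on $X_0+X_1$.

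A final caveat concerns the quasi-normed setting: because the couples are only quasinormed, the triangle inequality carries a multiplicative constant, so one must check that $K(t,\cdot)$ remains an equivalent quasi-norm and that the Riesz-type decomposition stays valid. These constants, however, do not interfere with the scaling substitution above, which is precisely what pins down the exponents $1-\theta$ on $M_0$ and $\theta$ on $M_1$.
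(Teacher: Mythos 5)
The paper does not actually prove this theorem --- it is recalled verbatim from Bergh--L\"ofstr\"om \cite{BeLo}, Theorem 3.11.2, and used as a black box. Your argument is precisely the standard proof given there: the key inequality $K(t,Tx;Y_0,Y_1)\leq M_0\,K\left(\frac{M_1}{M_0}t,x;X_0,X_1\right)$ followed by the scale invariance of $\frac{dt}{t}$, and it is correct, including your (rightly flagged) observation that the sublinear case requires the target couple to consist of quasi-normed function lattices so that $Tx$ can be split as $w_0+w_1$ with $|w_i|\leq |Tx_i|$ --- an assumption implicit in the cited formulation.
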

For the convenience we refer the reader to the books \cite{BeLo,Lu,Tri} which provide the detailed description of the theory for interpolation spaces and the differential operators.
 
\section{The linear parabolic equations}\label{Section 2}
\subsection{The setting of equations on interpolation spaces}\label{2.2}
Let $X,\, Y_1$ and $Y_2$ be Banach spaces, $(Y_1, Y_2)$ be a couple of Banach spaces and $Y:= (Y_1, Y_2)_{\theta,\infty}$ be a real interpolation space for some $0< \theta <1$. We now consider the inhomogeneous linear evolution equation of the form
\begin{equation}\label{NSE_linear}
u'(t) + Au(t) = Bf(t),t \in \mathbb{R},
\end{equation}
where the unknown $u(t)\in Y $, the operator $-A$ is a generator of a $C_0$-semigroup $e^{-tA}$ on $Y_1$ and $Y_2$, $f$ is a function from $\mathbb{R}$ to $X$ and $B$ is the ''connection'' operator between $X$ and $Y$ such that $e^{-tA}B\in\mathcal{L}(X, Y_i)$ for $i=1,2$ and $t\geqslant 0.$

Since $e^{-tA}$ is $C_0$-semigroup on $Y_1$ and $Y_2$, we have that $e^{-tA}$ is also $C_0$-semigroup on $Y=(Y_1,Y_2)_{\theta,\infty}$. This is because that the real interpolation functor $(.,.)_{\theta,\infty}\, (0<\theta<1)$ satisfies the dense condition, i.e, $Y_1\cap Y_2$ is dense in $Y$ (for more detail see \cite[Proposition 3.8 (c)]{Tr19} and \cite[Subsection 1.6.2]{Tri}).

Many practical problems can be written in the form \eqref{NSE_linear}. For examples, the linear form of the diffusion equations with rough coefficients, where $B=\mathrm{Id}$, i.e, the identity operator (see Section \ref{Section 4}), the linear equations of fluid dynamic flows, where $B=\mathbb{P}\dive$ (see \cite{GeHiHu}) and the control theory, where $B$ is the input operator of the system (see \cite{BD,PS} and the references therein).

We assume that $e^{-tA}B$ satisfies the following polynomial estimates:
\begin{assumption}\label{A}
Assume that $Y_i$ has a Banach pre-dual $Z_i$ for $i=1, 2$ (that means $Y_i=Z'_i$) such that ${Z_1} \cap {Z_2}$ is dense in $Z_i$. Let $-A$ be the generator of a $C_0$-semigroup $e^{-t A}$ on $Y_1$ and $Y_2$. Furthermore, suppose that there exist constants $\alpha_1, \alpha_2\in\mathbb{R}$ with $0<\alpha_2<1<\alpha_1$ and $L>0$ such that
\begin{equation}\label{inequalities_basic}
\begin{gathered}
  {\left\| {{e^{ - tA}}Bv} \right\|_{{Y_1}}} \leqslant {L}{t^{ - {\alpha _1}}}{\left\| v \right\|_X}\ ,\ t > 0, \hfill \\
  {\left\| {{e^{ - tA}}Bv} \right\|_{{Y_2}}} \leqslant {L}{t^{ - {\alpha _2}}}{\left\| v \right\|_X} \ ,  \ t > 0, \hfill \\ 
\end{gathered}
\end{equation}
where the operator $B$ is given in the linearized equation \eqref{NSE_linear}.
\end{assumption}
\begin{remark}
\item[(i)] The condition that ${Z_1} \cap {Z_2}$ is dense in $Z_i \,\, (i=1,2)$ guarantees the dual and pre-dual equalities (see Definition \ref{dual}).
\item[(ii)] From Assumption \ref{A} we can see that the spectrum $\sigma(A)$ contains $0$ and the semigroup $e^{-tA}$ is polynomially stable. This is a new point comparing with the previous works (see for examples \cite{Di2006,Di2007}) on PAP-functions. In particular, the semigroups considered in the previous works are both exponentially stable. An example of the equation associated with the semigroup $e^{-tA}$ validating Assumption \ref{A} with $B=\mathrm{Id}$ (the identity operator) can be found in Section \ref{Section 4} of this paper as well as many other examples of the fluid dynamic equations with $B=\mathbb{P}\mathrm{div}$ (see for example \cite{GeHiHu,HiHuySe,HuyHaXuan}).
\end{remark}
\begin{definition}\label{dual}
Since $(Z_1,Z_2)_{\theta,1}$ is a predual space of $Y=(Y_1,Y_2)_{\theta,\infty}$, we say that $B'$ and $e^{-tA'}$ are the dual operators of $B$ and $e^{-tA}$ respectively if for each $\psi\in (Z_1,Z_2)_{\theta,1}$, $f\in X$ and $g\in Y$ we have
$$\left< Bf,\psi\right> = \left< f, B'\psi\right>,$$
$$\left< e^{-tA}g,\psi\right> = \left< g,e^{-tA'}\psi \right>,$$
where $\left<.,.\right>$ is the scalar product between the dual spaces.
\end{definition}

We recall the definition of mild solutions of the equation \eqref{NSE_linear} on the whole time-line axis (see \cite[equation $(7)$]{HiHuySe}):
\begin{definition}
Suppose that the function $(-\infty,t]\ni \tau \mapsto \left< e^{-(t-\tau)A}Bf(\tau),\psi \right> \in \mathbb{C}$ is integrable for each $\psi\in (Z_1,Z_2)_{\theta,1}$. A continuous function $u: \r \to Y$ is called a mild solution to $(\ref{NSE_linear})$ if $u$ satisfies the integral equation
\begin{equation}\label{MILD}
u(t) = \int_{-\infty }^t {{e^{-(t - \tau)A}}Bf(\tau)d\tau } ,t \in \mathbb{R},
\end{equation}
in the weak sense, i.e, for each $\psi \in (Z_1,Z_2)_{\theta,1}$ we have that
\begin{equation*}
\left<u(t) , \psi \right> = \int_{-\infty }^t \left< e^{-(t - \tau)A}Bf(\tau)  , \psi \right> d\tau  ,t \in \mathbb{R}.
\end{equation*}
\end{definition} 
We notice that if $-A$ generates a bounded analytic semigroup $(e^{-tA})_{t\geq 0}$ on $Y$, then the mild solution given by \eqref{MILD} is also a classical solution (see \cite[Proposition 3.1.16]{AH}).

\subsection{The existence and uniqueness of pseudo almost periodic mild solutions to linear equations}
We state and prove the basic estimate in the following lemma:
\begin{lemma}\label{basic_ine}
Suppose that Assumption \ref{A} holds. 
Let $B'$ and $e^{-tA'}$ be the dual operators of $B$ and $e^{-tA}$ respectively, $X'$ be the dual space of $X$. For all $\psi \in (Z_1,Z_2)_{\theta,1}$, the following assertion holds: the function $t\mapsto \left\|B'e^{-tA'}\psi\right\|_{X'}$  belongs to $L^{1}(0;\infty)$ and 
\begin{equation}\label{L1es}
\int_0^\infty\left\|B'e^{-tA'}\psi\right\|_{X'}dt \leqslant \tilde{L}\|\psi\|_{(Z_1,Z_2)_{\theta,1}}
\end{equation}
for a positive constant $\tilde{L}$ which does not depend on $\psi$. Here, the spaces $X$, $Z_1$ and $Z_2$ are given in Definition \ref{A}.
\end{lemma}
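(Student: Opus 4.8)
The plan is to move the two polynomial bounds of Assumption~\ref{A} over to the dual side by a pre-adjoint identity, and then to recover $L^1$-integrability from a single real-interpolation computation. The whole point is that neither endpoint estimate is integrable on $(0,\infty)$ by itself: the bound with $\alpha_2<1$ controls the singularity at $t=0$ but decays too slowly at $\infty$, while the bound with $\alpha_1>1$ does the opposite, and only their interpolant lands in $L^1$.

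First I would fix $i\in\{1,2\}$ and record that, by Definition~\ref{dual}, $B'e^{-tA'}$ is exactly the pre-adjoint of $e^{-tA}B\in\mathcal{L}(X,Y_i)$ for the duality $Y_i=Z_i'$: for $v\in X$ and $\psi\in Z_1\cap Z_2$,
\[
\langle v,\,B'e^{-tA'}\psi\rangle=\langle Bv,\,e^{-tA'}\psi\rangle=\langle e^{-tA}Bv,\,\psi\rangle ,
\]
and this persists for all $\psi\in Z_i$ because $Z_1\cap Z_2$ is dense in $Z_i$ (this is the role of that density hypothesis). Since passing to the pre-adjoint preserves the operator norm, Assumption~\ref{A} immediately gives
\[
\|B'e^{-tA'}\psi\|_{X'}\le \|e^{-tA}B\|_{\mathcal{L}(X,Y_i)}\,\|\psi\|_{Z_i}\le L\,t^{-\alpha_i}\|\psi\|_{Z_i},\qquad t>0,\ i=1,2 .
\]

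Next I would view $S\colon\psi\mapsto\big(t\mapsto B'e^{-tA'}\psi\big)$ as a (linear, hence sublinear) operator into $X'$-valued functions on $(0,\infty)$. Since $t\mapsto t^{-\alpha_i}$ lies in the weak space $L^{1/\alpha_i,\infty}(0,\infty)$ with norm independent of $\psi$, the previous bound says precisely that $S\colon Z_i\to L^{1/\alpha_i,\infty}\big((0,\infty);X'\big)$ is bounded; note that $\alpha_1>1$ makes $1/\alpha_1<1$, so the quasi-normed hypotheses of Theorem~\ref{GIT} are genuinely needed here. Applying Theorem~\ref{GIT} to $S$ with the couples $(Z_1,Z_2)$ and $\big(L^{1/\alpha_1,\infty},L^{1/\alpha_2,\infty}\big)$ at exponents $(\theta,1)$, together with the Lorentz interpolation identity from the Preliminaries, sends the target to $L^{p,1}\big((0,\infty);X'\big)$ with $1/p=(1-\theta)\alpha_1+\theta\alpha_2$. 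Because the interpolation parameter of $Y$ and the decay rates are tied by $\theta=(\alpha_1-1)/(\alpha_1-\alpha_2)$, this exponent is $1/p=1$, the target is $L^{1,1}=L^1$, and Theorem~\ref{GIT} yields $\|S\psi\|_{L^1}\le L^{1-\theta}L^{\theta}C\,\|\psi\|_{(Z_1,Z_2)_{\theta,1}}=\tilde L\,\|\psi\|_{(Z_1,Z_2)_{\theta,1}}$, which is exactly \eqref{L1es}.

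The delicate point is this passage to $L^1$, and I would be ready to carry it out by hand through the $K$-functional in case the weak-Lorentz identification needs justifying. Factoring the decomposition bound gives
\[
\|B'e^{-tA'}\psi\|_{X'}\le L\inf_{\psi=\psi_1+\psi_2}\big(t^{-\alpha_1}\|\psi_1\|_{Z_1}+t^{-\alpha_2}\|\psi_2\|_{Z_2}\big)=L\,t^{-\alpha_1}K\big(t^{\alpha_1-\alpha_2},\psi;Z_1,Z_2\big),
\]
and the substitution $s=t^{\alpha_1-\alpha_2}$ turns $\int_0^\infty\|B'e^{-tA'}\psi\|_{X'}\,dt$ into a constant multiple of $\int_0^\infty s^{-\theta-1}K(s,\psi)\,ds=\|\psi\|_{(Z_1,Z_2)_{\theta,1}}$, the matching of the $s$-exponents being precisely the condition $\theta=(\alpha_1-1)/(\alpha_1-\alpha_2)$. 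Either route makes clear that finiteness of the integral is not automatic but is forced by the interpolated decay rate hitting the critical value $1$.
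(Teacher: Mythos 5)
Your proposal is correct and follows essentially the same route as the paper: dualize the two polynomial estimates of Assumption \ref{A} onto $B'e^{-tA'}$ via the isometric embedding $Z_i\hookrightarrow Y_i'$, view $\psi\mapsto\left\|B'e^{-tA'}\psi\right\|_{X'}$ as a sublinear map into the weak spaces $L^{1/\alpha_i,\infty}(0,\infty)$, and apply Theorem \ref{GIT} together with the identification $\left(L^{1/\alpha_1,\infty},L^{1/\alpha_2,\infty}\right)_{\theta,1}=L^1(0,\infty)$ under $(1-\theta)\alpha_1+\theta\alpha_2=1$. Your explicit $K$-functional fallback and the remark on why quasi-normed spaces are needed (since $1/\alpha_1<1$) are welcome additions the paper leaves implicit, but they do not change the argument.
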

\begin{proof}
Since Assumption \ref{A}, we have the following inequalites on the dual spaces
\begin{equation}\label{inequalities_basic'}
\begin{gathered}
  \norm{ B'{e^{ - tA'}}\psi }_{X'} \leqslant {L}t^{ - {\alpha_1}}\norm{ \psi }_{Y_1'}\ ,\ t > 0, \hfill \\
   \norm{ B'{e^{ - tA'}}\psi }_{X'} \leqslant {L}t^{ - {\alpha _2}}\norm{ \psi }_{Y_2'}\ ,  \ t > 0, \hfill \\ 
\end{gathered}
\end{equation}
where $Y_1'$ and $Y_2'$ are the dual spaces of $Y_1$ and $Y_2$ respectively.

For $\psi\in Z_j, \, j=1,2 $ we denote by
\begin{equation}\label{v}
v_{\psi}(t):= \norm{B'e^{-tA'}\psi}_{X'}.
\end{equation}
Using inequalities \ref{inequalities_basic'} and the fact that the canonical embedding $Z_j \to Y_j'$ is an isometry, we get
$$v_{\psi}(t) \leqslant Ct^{-\alpha_j} \norm{\psi}_{Z_j} \hbox{  for  } \psi \in Z_j, \, \, j=1,2.$$
Therefore $v_{\psi} \in L^{1/\alpha_j, \infty}(0,\infty)$ and
\begin{equation}\label{inqualities_basic''}
\norm{v_{\psi}}_{L^{1/\alpha_j, \infty}(0,\infty)} \leq C_j \norm{\psi}_{Z_j} \hbox{  for  } \psi \in Z_j, \, \, j=1,2.
\end{equation}
These inequalities lead us to define the sublinear operator
\begin{align*}
T: Z_1+Z_2  &\to L^{1/\alpha_1, \infty}(0,\infty) + L^{1/\alpha_2, \infty}(0,\infty)\\
\psi &\mapsto v_\psi,
\end{align*}
where $v_\psi$ is defined by \eqref{v}.

The inequalities \eqref{inqualities_basic''} also yield that the operators
$$T:Z_1 \to L^{1/\alpha_1, \infty}(0,\infty) \hbox{  and  } T:Z_2 \to L^{1/\alpha_2, \infty}(0,\infty)$$
are sublinear. Applying Theorem \ref{GIT}, we obtain that the operator
$$T: (Z_1,Z_2)_{\theta,1} \to \left( L^{1/\alpha_1, \infty}(0,\infty), L^{1/\alpha_2, \infty}(0,\infty) \right)_{\theta,1} = L^1(0,\infty)$$
is also sublinear. It follows that
$$\int_0^\infty\left\|B'e^{-tA'}\psi\right\|_{X'}dt \leqslant \tilde{L}\|\psi\|_{(Z_1,Z_2)_{\theta,1}},$$
where $\tilde{L}>0$ is a constant.
\end{proof}
As a consequence of Lemma \ref{basic_ine}, we establish in the following lemma the existence and uniqueness of the mild solution of the linear equation $(\ref{NSE_linear})$.
\begin{lemma}\label{2.3*}
Suppose that Assumption \ref{A} holds. Let $f \in BC\left( {\mathbb{R},X} \right)$, $\psi \in (Z_1, Z_2)_{\theta, 1}$ and $\theta\in (0;\, 1)$ such that $1=(1-\theta)\alpha_1+\theta\alpha_2$. Then the equation $(\ref{NSE_linear})$ admits a unique mild solution $u$ satisfying
\begin{equation}\label{3.4}
{\left\| {u\left( t \right)} \right\|_Y} \leqslant \tilde{L}\norm{f}_{\infty,X},\ t \in \mathbb{R},
\end{equation}
for the  constant $\tilde{L}$ as in \eqref{L1es}.
\end{lemma}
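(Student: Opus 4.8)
The plan is to realize, for each fixed $t\in\mathbb{R}$, the right-hand side of the integral equation \eqref{MILD} as a bounded linear functional on the predual space $(Z_1,Z_2)_{\theta,1}$ and then to identify this functional with an element $u(t)\in Y$ through the duality $((Z_1,Z_2)_{\theta,1})'=(Y_1,Y_2)_{\theta,\infty}=Y$, which is exactly the dual equality secured by the density hypothesis that $Z_1\cap Z_2$ is dense in $Z_i$ in Assumption \ref{A}. The first move is to shift the operators onto the test element: using the dual operators of Definition \ref{dual},
\[
\langle e^{-(t-\tau)A}Bf(\tau),\psi\rangle=\langle f(\tau),\,B'e^{-(t-\tau)A'}\psi\rangle,\qquad \psi\in(Z_1,Z_2)_{\theta,1}.
\]

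Bounding this integrand by $\norm{f}_{\infty,X}\,\norm{B'e^{-(t-\tau)A'}\psi}_{X'}$ and substituting $s=t-\tau$, Lemma \ref{basic_ine} delivers simultaneously the integrability needed for the Pettis integral in \eqref{MILD} to make sense and the quantitative estimate
\[
\int_{-\infty}^{t}\bigl|\langle e^{-(t-\tau)A}Bf(\tau),\psi\rangle\bigr|\,d\tau\leq \norm{f}_{\infty,X}\int_0^\infty\norm{B'e^{-sA'}\psi}_{X'}\,ds\leq \tilde{L}\,\norm{f}_{\infty,X}\,\norm{\psi}_{(Z_1,Z_2)_{\theta,1}}.
\]
Here the hypothesis $1=(1-\theta)\alpha_1+\theta\alpha_2$ is precisely the relation that licenses the use of Lemma \ref{basic_ine}, since it is the condition under which the real interpolation of the weak Lebesgue spaces $L^{1/\alpha_1,\infty}$ and $L^{1/\alpha_2,\infty}$ returns $L^1(0,\infty)$. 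Consequently the linear map $\Lambda_t:\psi\mapsto \int_{-\infty}^{t}\langle f(\tau),B'e^{-(t-\tau)A'}\psi\rangle\,d\tau$ is well defined and bounded on $(Z_1,Z_2)_{\theta,1}$ with $\norm{\Lambda_t}\leq \tilde{L}\norm{f}_{\infty,X}$, the constant $\tilde{L}$ being that of \eqref{L1es}.

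By the predual duality, $\Lambda_t$ is represented by a unique $u(t)\in Y$ satisfying $\langle u(t),\psi\rangle=\Lambda_t(\psi)$ for all $\psi$, so that $u$ solves \eqref{MILD} in the weak sense and $\norm{u(t)}_Y=\norm{\Lambda_t}\leq \tilde{L}\norm{f}_{\infty,X}$, which is exactly \eqref{3.4}. Uniqueness is then immediate: any mild solution $v$ must satisfy $\langle v(t),\psi\rangle=\Lambda_t(\psi)=\langle u(t),\psi\rangle$ for every $\psi\in(Z_1,Z_2)_{\theta,1}$, and since these functionals separate the points of $Y$ we conclude $v\equiv u$.

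The step I expect to be the main obstacle is verifying that $t\mapsto u(t)$ is continuous into $Y$, as the definition of a mild solution demands. Writing $\langle u(t),\psi\rangle=\int_0^\infty\langle f(t-s),B'e^{-sA'}\psi\rangle\,ds$ and forming $\langle u(t+h)-u(t),\psi\rangle$, the integrand is dominated by $2\norm{f}_{\infty,X}\norm{B'e^{-sA'}\psi}_{X'}\in L^1(0,\infty)$, so for each fixed $\psi$ the continuity of $f$ together with dominated convergence gives $\langle u(t+h)-u(t),\psi\rangle\to 0$ as $h\to 0$; this yields weak-$*$ continuity without difficulty. Upgrading to norm continuity in $Y$ is the delicate point, because the estimate must be made uniform over the unit ball of $(Z_1,Z_2)_{\theta,1}$ while $f$ is merely bounded and continuous, not uniformly so. To close this gap I would establish the uniform integrability of the family $\{\,\norm{B'e^{-sA'}\psi}_{X'}\,\}_{\norm{\psi}\leq 1}$ coming from \eqref{L1es} and combine it with the strong continuity of the predual semigroup in a Vitali-type argument, splitting the $s$-integral near $s=0$ and on its complement and controlling each piece uniformly in $\psi$.
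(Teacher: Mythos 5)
Your proof follows essentially the same route as the paper's: dualize the integrand onto the predual via $\langle e^{-(t-\tau)A}Bf(\tau),\psi\rangle=\langle f(\tau),B'e^{-(t-\tau)A'}\psi\rangle$, substitute $s=t-\tau$, and invoke the $L^1$ bound of Lemma \ref{basic_ine} to get $|\langle u(t),\psi\rangle|\leqslant \tilde L\norm{f}_{\infty,X}\norm{\psi}_{(Z_1,Z_2)_{\theta,1}}$. In fact you are more complete than the paper, whose proof consists only of this estimate: the explicit identification of the functional $\Lambda_t$ with an element of $Y$, the uniqueness argument, and especially the issue of norm (rather than merely weak-$*$) continuity of $t\mapsto u(t)$ are all points the paper leaves implicit or unaddressed, and your flagging of the continuity upgrade as the delicate step is well taken.
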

\begin{proof}
In this proof and the rest of this paper, we use the weak (or weak$^*$) version of vector-valued integrals in the sense of the Pettis integral.
By assumption $(Z_1,Z_2)'_{\theta,1} = (Y_1,Y_2)_{\theta,\infty}$. We denote by $\left<.,. \right>$ the dual pair between $(Y_1,Y_2)_{\theta,\infty}$ and $(Z_1,Z_2)_{\theta,1}$. Then for each $\psi \in (Z_1,Z_2)_{\theta,1}$ we have the following estimates
\begin{eqnarray*}
\left|\left< \int_{-\infty}^t e^{-(t-\tau)A}Bf(\tau)d\tau,\psi \right> \right| &\leqslant& \int_{-\infty}^t \left|\left< e^{-(t-\tau)A}Bf(\tau),\psi \right>  \right|d\tau \cr
&=& \int_{-\infty}^t\left|\left< f(\tau), B'e^{-(t-\tau)A'}\psi \right>  \right|d\tau\cr
&\leqslant& \int_{-\infty}^t \norm{f(\tau)}_X \norm{B'e^{-(t-\tau)A'}\psi}_{X'}d\tau\cr
&\leqslant& \norm{f}_{\infty,X}\int_{-\infty}^t \norm{B'e^{-(t-\tau)A'}\psi}_{X'}d\tau\cr
&\leqslant& \tilde{L} \norm{f}_{\infty,X} \norm{\psi}_{(Z_1,Z_2)_{\theta,1}}.
\end{eqnarray*}
The last inequality holds due to Lemma \ref{L1es} and the transformation $\tau \mapsto t-\tau$.
\end{proof}
Lemma \ref{2.3*} implies that the solution operator $S:BC(\mathbb{R},X) \to BC(\mathbb{R},Y)$ defined by
\begin{equation}\label{3.8}
{S(f)(t)} : = \int_{ - \infty }^t {{e^{ - \left( {t - s} \right)A}}Bf(s)ds}, t \in\mathbb{R}
\end{equation}
is a bounded operator and $\norm{S} \leq \tilde{L}$ for the constant $\tilde{L}$ appearing in inequality \eqref{3.4}.
\begin{lemma}\label{soloperator}
Suppose that Assumption \ref{A} holds.  The following assertions hold
\begin{itemize}
\item[i)] If $f \in AP(\r,X)$, then $S(f)\in AP(\r,Y)$.
\item[ii)] If $f \in PAA_0(\r,X)$, then $S(f) \in PAA_0(\r,Y)$.
\end{itemize}
\end{lemma}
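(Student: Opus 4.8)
The plan is to exploit that $S$ is the translation-invariant operator $S(f)(t)=\int_0^\infty e^{-rA}Bf(t-r)\,dr$ obtained from \eqref{3.8} by the substitution $r=t-s$, together with its boundedness $\norm{S}\leq\tilde L$ from Lemma \ref{2.3*}, handling the two assertions separately.

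\emph{Part i).} Fix $f\in AP(\mathbb{R},X)$ and let $T$ be an $\varepsilon$-translation number of $f$, so that $\norm{f(\cdot+T)-f(\cdot)}_{\infty,X}<\varepsilon$. Substituting $s\mapsto s+T$ in \eqref{3.8} and using translation invariance of Lebesgue measure, I would first establish the identity $S(f)(t+T)-S(f)(t)=S\bigl(f(\cdot+T)-f(\cdot)\bigr)(t)$ for every $t$; this is to be verified in the weak (Pettis) sense by pairing with $\psi\in(Z_1,Z_2)_{\theta,1}$ and changing variables inside the scalar integral. Applying $\norm{S}\leq\tilde L$ then gives $\norm{S(f)(t+T)-S(f)(t)}_Y\leq\tilde L\varepsilon$ uniformly in $t$, so every $\varepsilon$-translation number of $f$ is an $(\tilde L\varepsilon)$-translation number of $S(f)$. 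Since these numbers are relatively dense and $S(f)\in BC(\mathbb{R},Y)$ by Lemma \ref{2.3*}, it follows that $S(f)\in AP(\mathbb{R},Y)$.

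\emph{Part ii).} Now $f\in PAP_0(\mathbb{R},X)$, i.e. $f\in BC(\mathbb{R},X)$ with vanishing mean of $\norm{f(\cdot)}_X$, and I want the same for $\norm{S(f)(\cdot)}_Y$. The model is the classical convolution argument: were the kernel integrable in operator norm, Fubini would give $\frac{1}{2L}\int_{-L}^L\norm{S(f)(t)}_Y\,dt\leq\int_0^\infty\norm{e^{-rA}B}_{\mathcal{L}(X,Y)}\bigl(\frac{1}{2L}\int_{-L}^L\norm{f(t-r)}_X\,dt\bigr)dr$, and dominated convergence (inner average bounded by $\norm{f}_{\infty,X}$ and tending to $0$ for each $r$) would finish. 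The difficulty is that interpolating \eqref{inequalities_basic} with $Y=(Y_1,Y_2)_{\theta,\infty}$ and the relation $1=(1-\theta)\alpha_1+\theta\alpha_2$ only yields $\norm{e^{-rA}B}_{\mathcal{L}(X,Y)}\leq C r^{-1}$, which is \emph{not} integrable at either endpoint. I would therefore truncate: for $0<\delta<R<\infty$ split $S(f)=S_\delta^R(f)+\mathcal{R}_\delta(f)+\mathcal{R}^R(f)$ according to $r\in[\delta,R]$, $r\in(0,\delta)$, $r\in(R,\infty)$. On the compact range the bound $C r^{-1}\in L^1(\delta,R)$ makes the classical argument apply verbatim, giving $\frac{1}{2L}\int_{-L}^L\norm{S_\delta^R(f)(t)}_Y\,dt\to0$. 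It then suffices to make the two remainders uniformly small in $t$; by duality with $\psi\in(Z_1,Z_2)_{\theta,1}$ and \eqref{v},
\[
\norm{\mathcal{R}_\delta(f)(t)}_Y\leq\norm{f}_{\infty,X}\sup_{\norm{\psi}\leq1}\int_0^\delta v_\psi(r)\,dr,\qquad \norm{\mathcal{R}^R(f)(t)}_Y\leq\norm{f}_{\infty,X}\sup_{\norm{\psi}\leq1}\int_R^\infty v_\psi(r)\,dr .
\]

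For the tails I would use the $K$-functional of the couple $(Z_1,Z_2)$: from \eqref{inequalities_basic'} and the isometric embedding $Z_j\to Y_j'$, every decomposition $\psi=a+b$ gives $v_\psi(r)\leq L\bigl(r^{-\alpha_1}\norm{a}_{Z_1}+r^{-\alpha_2}\norm{b}_{Z_2}\bigr)$, hence $v_\psi(r)\leq L r^{-\alpha_1}K(r^{\alpha_1-\alpha_2},\psi)$. Writing $\beta=\alpha_1-\alpha_2$ and substituting $s=r^\beta$, the relation $\theta=(\alpha_1-1)/\beta$ (equivalent to $1=(1-\theta)\alpha_1+\theta\alpha_2$) produces exactly the weight of the $(\cdot,\cdot)_{\theta,1}$ norm, so that $\int_0^\delta v_\psi(r)\,dr\leq\frac{L}{\beta}\int_0^{\delta^\beta}s^{-\theta}K(s,\psi)\frac{ds}{s}$ and $\int_R^\infty v_\psi(r)\,dr\leq\frac{L}{\beta}\int_{R^\beta}^\infty s^{-\theta}K(s,\psi)\frac{ds}{s}$. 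For each fixed $\psi$ these are tails of the convergent integral $\norm{\psi}_{(Z_1,Z_2)_{\theta,1}}=\int_0^\infty s^{-\theta}K(s,\psi)\frac{ds}{s}$ and vanish as $\delta\to0$, $R\to\infty$. The main obstacle is precisely the \emph{uniformity} of this vanishing over the unit ball $\{\norm{\psi}_{(Z_1,Z_2)_{\theta,1}}\leq1\}$, i.e. the equi-integrability at $0$ and $\infty$ of the family $\{s^{-\theta}K(s,\psi)/s\}_\psi$, which is not implied by finiteness of the individual integrals; I would secure it by a dedicated equi-integrability argument (e.g. reducing the supremum to a fixed sequence via separability of the predual, or exploiting concavity and monotonicity of $s\mapsto K(s,\psi)$). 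Granting this, choosing $\delta,R$ so both remainders are $<\varepsilon\norm{f}_{\infty,X}$ and letting $L\to\infty$ gives $\limsup_L\frac{1}{2L}\int_{-L}^L\norm{S(f)(t)}_Y\,dt\leq\varepsilon$, and since $\varepsilon$ is arbitrary, $S(f)\in PAP_0(\mathbb{R},Y)$.
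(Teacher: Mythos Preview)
Part~i) is correct and is exactly the paper's argument: translation invariance of $S$ together with the bound $\|S\|\leq\tilde L$ transports $\varepsilon$-translation numbers of $f$ to $\tilde L\varepsilon$-translation numbers of $S(f)$.

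For Part~ii) your decomposition is genuinely different from the paper's. The paper does not truncate in the convolution variable $r=t-\tau$; instead it splits the $\tau$-integral at $\tau=-L$ (the \emph{same} $L$ that appears in the mean), treats the far piece $\int_{-\infty}^{-L}$ as a tail of \eqref{L1es}, and for the near piece $\int_{-L}^t$ applies Fubini to exchange the $t$- and $\tau$-integrations. That exchange factors the estimate into $\bigl(\int_0^\infty v_\psi\bigr)\cdot\bigl(\tfrac{1}{2L}\int_{-L}^L\|f(s)\|_X\,ds\bigr)$, with the first factor bounded by $\tilde L\|\psi\|$ via Lemma~\ref{basic_ine} and the second vanishing by the $PAP_0$ hypothesis on $f$. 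To pass the supremum over $\psi$ inside the $t$-integral the paper replaces it by an approximating sequence $(\psi_n)$ and uses dominated convergence. The payoff is that the main piece needs no smallness of any auxiliary parameter; the vanishing comes directly from the mean of $\|f\|_X$.

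The gap you yourself identify in your approach is real, and neither of your proposed fixes closes it. Passing to a sequence by separability does nothing for uniformity: each tail $\int_0^{\delta^\beta} s^{-\theta}K(s,\psi_n)\,\tfrac{ds}{s}$ tends to $0$ with $\delta$, but there is no common rate across $n$. Concavity and monotonicity of $s\mapsto K(s,\psi)$ give only $K(s,\psi)\leq\min(\|\psi\|_{Z_1},\,s\|\psi\|_{Z_2})$, and neither $\|\psi\|_{Z_1}$ nor $\|\psi\|_{Z_2}$ is controlled by $\|\psi\|_{(Z_1,Z_2)_{\theta,1}}$; a bounded set in the interpolation space need not be bounded in either endpoint space, so the $K$-functionals in the unit ball can concentrate mass arbitrarily close to $0$ or $\infty$. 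In other words, boundedness of $\psi\mapsto v_\psi$ into $L^1(0,\infty)$ (which is all Lemma~\ref{basic_ine} gives) does not imply that the image of the unit ball is equi-integrable, and hence the uniform remainder bounds $\|\mathcal R_\delta(f)\|_{\infty,Y},\|\mathcal R^R(f)\|_{\infty,Y}<\varepsilon\|f\|_{\infty,X}$ cannot be derived from Assumption~\ref{A} alone. If you wish to salvage your splitting, you should abandon the attempt to make the remainders small uniformly in $t$ and instead average them in $t$ first, then interchange integrals as the paper does on its near piece, so that the factor $\|f\|_{\infty,X}$ is replaced by the vanishing mean $\tfrac{1}{2L}\int_{-L}^L\|f\|_X$.
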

\begin{proof}
$i)$ Since $f$ is almost periodic, we obtain that for all $\varepsilon >0$ there exists a real number $l(\varepsilon) > 0$ such that for every $a \in \mathbb{R}$, we can find $T \in [a, a + l(\varepsilon)]$ such that
$$ \norm{f(t+T) - f(t)}_{X}<\varepsilon, \,\,\,\, t\in \mathbb{R}.$$ 
Then, we have
\begin{eqnarray*}
\norm{S(f)(t+T) - S(f)(t)}_Y &=& \norm{\int_{-\infty}^t e^{-(t-\tau)A}B[f(\tau+T) - f(\tau)]d\tau}_Y \cr
&=& \norm{\int_0^\infty e^{-\tau A}B[f(t-\tau+T) - f(t-\tau)]d\tau}_Y \cr 
&\leq& \tilde{L}\norm{f(.+T) - f(.)}_{\infty,X}\cr
&\leq& \varepsilon\tilde{L}.
\end{eqnarray*}
Therefore, the function $S(f)(t):=\int_{-\infty}^t e^{-(t-\tau)A}B f(\tau) d\tau$ belongs to  $AP(\mathbb{R},Y)$.\\

$ii)$ To prove this assertion we develop the method in \cite[Lemma 3.8]{Di2006} or \cite[Theorem 3.4]{Di2007} by replacing the condition for the semigroup $e^{-tA}$ from the exponential stability to  the polynomial stability.

We need to show that   
\begin{equation}
\lim_{r\rightarrow\infty}\frac{1}{2L} \int_{-L}^L \left\|\int_{-\infty}^t e^{-(s-\tau)A}B f(\tau) d\tau \right\|_{Y} dt= 0.
\end{equation}
This corresponds to prove that
\begin{equation}
\lim_{r\rightarrow\infty}\frac{1}{2L} \int_{-L}^L \sup_{\norm{\psi}\leq 1} \left| \int_{-\infty}^t \left< e^{-(s-\tau)A}B f(\tau),\psi \right> d\tau \right| dt= 0,
\end{equation}
for all $\psi \in (Z_1,Z_2)_{\theta,1}$ such that $\norm{\psi}:= \norm{\psi}_{(Z_1,Z_2)_{\theta,1}}\leq 1$.

Indeed, for each $\psi \in (Z_1,Z_2)_{\theta,1}$ with $\norm{\psi}:= \norm{\psi}_{(Z_1,Z_2)_{\theta,1}}\leq 1$ we have
\begin{eqnarray}\label{1'}
&&\dfrac{1}{2L}\int_{-L}^{L}\sup_{\norm{\psi}\leq 1}\left|\int_{-\infty}^t \left<e^{-(t-\tau)A}B f(\tau) ,\psi \right>d\tau \right| dt \cr
&=&\dfrac{1}{2L}\int_{-L}^{L}\sup_{\norm{\psi}\leq 1}\left|\int_{-\infty}^{-L} \left<e^{-(t-\tau)A}B f(\tau) ,\psi \right>d\tau \right| dt \cr
&&+\dfrac{1}{2L}\int_{-L}^{L}\sup_{\norm{\psi}\leq 1}\left|\int_{-L}^t \left<e^{-(t-\tau)A}B f(\tau) ,\psi \right>d\tau \right| dt. 
\end{eqnarray}
Recall that we proved inequality \eqref{L1es} in Lemma \ref{basic_ine} that
$$\int_0^\infty\norm{B'e^{-\tau A'}\psi}_{X'}d\tau \leqslant \tilde{L}\norm{\psi}_{(Z_1,Z_2)_{\theta,1}}$$
for all $\psi\in \left(Z_1,Z_2\right)_{\theta, 1}$.
By changing variable $\tau:=t-\tau$ we get
\begin{equation}\label{L1Ba}
\int_{-\infty}^t \left\|B'e^{-(t-\tau)A'} \psi \right\|_{X'}d\tau \leqslant \tilde{L}\norm{\psi}_{(Z_1,Z_2)_{\theta,1}}.
\end{equation}
This shows that for each $\varepsilon >0$, there exists $L_0(\varepsilon)\in \mathbb{R}$ depends only on $\varepsilon$ satisfying that for all $L > L_0(\varepsilon)$, we have
$$\int_{-\infty}^{-L} \norm{ B'e^{-(t-\tau)A'} \psi }_{X'} dt \leqslant \varepsilon\left\|\psi\right\|_{\left(Z_1,Z_2\right)_{\theta, 1}}$$
for all $\psi\in \left(Z_1,Z_2\right)_{\theta, 1}$.
Hence, for all $L>L_0(\varepsilon)$ we have
$$\sup_{\norm{\psi}\leqslant 1}\int_{-\infty}^{-L} \norm{ B'e^{-(t-\tau)A'} \psi }_{X'} dt \leqslant \sup_{\norm{\psi}\leqslant 1}\varepsilon\left\|\psi\right\|_{\left(Z_1,Z_2\right)_{\theta, 1}} \leqslant \varepsilon.$$
Therefore, we establish that
\begin{eqnarray}\label{2}
&&\dfrac{1}{2L}\int_{-L}^{L}\sup_{\norm{\psi}\leq 1}\left| \int_{-\infty}^{-L} \left<e^{-(t-\tau)A}B f(\tau) ,\psi \right> d\tau \right| dt \cr
&\leq&\dfrac{1}{2L}\int_{-L}^{L}\sup_{\norm{\psi}\leq 1}{\int_{-\infty}^{-L} \left|\left<e^{-(t-\tau)A}B f(\tau) ,\psi \right>\right|d\tau dt }\cr
&=& \dfrac{1}{2L}\int_{-L}^{L}\sup_{\norm{\psi}\leq 1}{\int_{-\infty}^{-L} \left|\left<f(\tau),B'e^{-(t-\tau)A'}\psi \right>\right|d\tau dt }\cr
&\leq& \dfrac{1}{2L}\int_{-L}^{L}\sup_{\norm{\psi}\leq 1}{\int_{-\infty}^{-L} \norm{f(\tau)}_X \norm{B'e^{-(t-\tau)A'}\psi}_{X'} d\tau dt }\cr
&\leq& \dfrac{\norm{f}_{\infty,X}}{2L}\int_{-L}^{L}\sup_{\norm{\psi}\leq 1}{\int_{-\infty}^{-L} \norm{B'e^{-(t-\tau)A'}\psi}_{X'} d\tau dt }\cr
&\leq& \dfrac{\norm{f}_{\infty,X}}{2L}\int_{-L}^{L} \varepsilon dt = \varepsilon \norm{f}_{\infty,X}.
\end{eqnarray}

On the other hand, there exists a sequence $(\psi_n)_{n\in \mathbb{N}}$ in $(Z_1,Z_2)_{\theta,1}$ such that $\norm{\psi_n}:= \norm{\psi_n}_{(Z_1,Z_2)_{\theta,1}}\leqslant 1$ and
$$\lim_{n\to \infty}\left| \int_{-L}^t \left< e^{-(t-\tau)A}Bf(\tau),\psi_n \right> d\tau  \right| = \sup_{\norm{\psi}\leqslant 1}\left| \int_{-L}^t \left< e^{-(t-\tau)A}Bf(\tau),\psi\right> d\tau  \right|.$$
We have
\begin{eqnarray*}
\left| \int_{-L}^t \left< e^{-(t-\tau)A}Bf(\tau),\psi_n \right> d\tau  \right| &\leqslant& \int_{-L}^t\norm{f(\tau)}_X\norm{B'e^{-(t-\tau)A'}\psi_n}_{X'}d\tau \cr
&\leqslant& \norm{f}_{\infty,X}\int_{-L}^t\norm{B'e^{-(t-\tau)A'}\psi_n}_{X'}d\tau \cr
&\leqslant& \norm{f}_{\infty,X}\int_{-\infty}^t\norm{B'e^{-(t-\tau)A'}\psi_n}_{X'}d\tau \cr
&\leqslant& \norm{f}_{\infty,X}\tilde{L}\norm{\psi_n}_{(Z_1,Z_2)_{\theta,1}} \hbox{   (due to \eqref{L1Ba})}\cr
&\leqslant& \tilde{L}\norm{f}_{\infty,X}.
\end{eqnarray*}
Since $\tilde{L}\norm{f}_{\infty,X}$ is integrable on $[-L, L]$, by using the dominated convergence theorem we have that $\sup_{\norm{\psi}\leqslant 1}\left| \int_{-L}^t \left< e^{-(t-\tau)A}Bf(\tau),\psi\right> d\tau  \right|$ is integrable on $[-L, L]$ and
\begin{eqnarray}\label{Domi}
&&\lim_{n\to\infty}\int_{-L}^L\left| \int_{-L}^t \left< e^{-(t-\tau)A}Bf(\tau),\psi_n\right> d\tau  \right|\cr
&=&\int_{-L}^L\sup_{\norm{\psi}\leqslant 1}\left| \int_{-L}^t \left< e^{-(t-\tau)A}Bf(\tau),\psi\right> d\tau  \right|.
\end{eqnarray}
Therefore, using inequalities \eqref{L1es} and \eqref{Domi} we have the following estimates
\begin{eqnarray*}
&&\frac{1}{2L} \int_{-L}^L \sup_{\norm{\psi}\leq 1}\left|\int_{-\infty}^t \left< e^{-(t-\tau)A}B f(\tau),\psi \right> d\tau \right| d t \cr
&=& \lim_{n\to\infty}\frac{1}{2L}\int_{-L}^L\left| \int_{-L}^t \left<f(\tau),B'e^{-(t-\tau)A'}\psi_n\right> d\tau  \right| dt \hbox{    (due to \eqref{Domi}) }\cr
&\leqslant& \lim_{n\to\infty}\frac{1}{2L}\int_{-L}^L \int_{-L}^t \left|\left< f(\tau), B'e^{-(t-\tau)A'}\psi_n\right> \right| d\tau dt\cr
&\leqslant& \lim_{n\to\infty}\frac{1}{2L}\int_{-L}^L \int_{-L}^t \norm{f(\tau)}_X \norm{B'e^{-(t-\tau)A'}\psi_n}_{X'} d\tau dt\cr
&=& \lim_{n\to\infty}\frac{1}{2L}\int_{-L}^L \int_0^{t+L} \norm{f(t-\xi)}_X \norm{B'e^{-\xi A'}\psi_n}_{X'} d\xi dt \hbox{    (where $\xi:=t-\tau$)}\cr
&=& \lim_{n\to\infty}\frac{1}{2L}\int_0^{2L} \int_{\xi-L}^L \norm{f(t-\xi)}_X \norm{B'e^{-\xi A'}\psi_n}_{X'} dt d\xi\cr
&=& \lim_{n\to\infty}\frac{1}{2L}\int_0^{2L} \int_{-L}^{L-\xi} \norm{f(s)}_X \norm{B'e^{-\xi A'}\psi_n}_{X'} ds d\xi  \hbox{    (where $s:=t-\xi$)}\cr
&\leqslant& \lim_{n\to\infty}\frac{1}{2L}\int_0^{2L} \int_{-L}^L \norm{f(s)}_X \norm{B'e^{-\xi A'}\psi_n}_{X'} ds d\xi\cr
&=& \lim_{n\to\infty}\frac{1}{2L} \int_0^{2L} \norm{B'e^{-\xi A'}\psi_n}_{X'}  \int_{-L}^L \norm{f(s)}_X ds d\xi\cr
&\leqslant& \lim_{n\to\infty}\frac{1}{2L} \int_0^\infty \norm{B'e^{-\xi A'}\psi_n}_{X'}d\xi  \int_{-L}^L \norm{f(s)}_X ds\cr
&\leqslant& \lim_{n\to\infty}\frac{\tilde{L}\norm{\psi_n}}{2L}\int_{-L}^L \norm{f(s)}_X ds \hbox{    (due to \eqref{L1es})}\cr
&\leq & \frac{\tilde{L}}{2L} \int_{-L}^L  \norm{f(s)}_{X} ds.  
\end{eqnarray*}
The property $f\in PAA_{0}(\mathbb{R}, X)$ leads to
$$\lim_{L\rightarrow\infty}\frac{1}{2L} \int_{-L}^L \left\| f(s) \right\|_{X}ds = 0.$$
Hence, there exists $L_1\in\mathbb{R}$ such that
$$\frac{1}{2L} \int_{-L}^L \left\| f(s) \right\|_{X}ds \leqslant \varepsilon,$$
for all $L> L_1$. Therefore, we have that
\begin{equation}\label{3}
\dfrac{1}{2L}\int_{-L}^{L}\sup_{\norm{\psi}\leq 1} \left|\int_{-L}^{t} \left<e^{-(t-\tau)A}B f(\tau) ,\psi \right>d\tau \right|dt \leq \tilde{L}\varepsilon,
\end{equation} 
for all $L> L_1$.
 
By combining \eqref{1'}, \eqref{2} and \eqref{3}, we get
$$\frac{1}{2L} \int_{-L}^L \sup_{\norm{\psi}\leq 1} \left|\int_{-\infty}^t \left< e^{-(t-\tau)A}B f(\tau),\psi \right> d\tau \right| d t \leq \varepsilon(\norm{f}_{\infty,X}+\tilde{L}),$$
for all $L \geq \max\left\{L_0, L_1\right\}$ and all $\psi \in (Z_1,Z_2)_{\theta,1}$ such that $\norm{\psi}\leq 1$. Hence
$$\frac{1}{2L} \int_{-L}^L \left\|\int_{-\infty}^t e^{-(t-\tau)A}B f(\tau) d\tau \right\|_Y d t \leq \varepsilon(\norm{f}_{\infty,X}+\tilde{L}).$$
Therefore, 
$$\lim_{L\rightarrow\infty}\frac{1}{2L} \int_{-L}^L \norm{S(f)(t)}_Y dt= 0.$$
\end{proof}

Using Lemma \ref{soloperator} we establish the existence and uniqueness of PAP-mild solution to the linear equation \eqref{NSE_linear} in the following theorem:
\begin{theorem}\label{thm1}
Suppose that Assumption \ref{A} holds. We have that if $f \in PAP(\r,X)$, then $S(f) \in PAP(\mathbb{R},Y)$. This means that the linear equation \eqref{NSE_linear} has a unique pseudo
almost periodic mild solution $\hat{u}\in PAP(\r,Y)$ for each inhomogeneous part $f\in PAP(\mathbb{R},X)$. Moreover, $\hat{u}$ satisfies
\begin{equation}\label{3.4'}
\norm{\hat{u}(t)}_Y \leqslant \tilde{L}\norm{f}_{\infty,X},\, t \in \mathbb{R},
\end{equation}
for the  constant $\tilde{L}$ as in \eqref{L1es}.
\end{theorem}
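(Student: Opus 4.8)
The plan is to reduce the statement to the two parts of Lemma \ref{soloperator} by exploiting the defining decomposition of a pseudo almost periodic function together with the linearity of the solution operator $S$ from \eqref{3.8}. First I would record that $S$ is linear: this is immediate from the integral formula \eqref{3.8}, interpreted in the weak (Pettis) sense as throughout, since the integrand $e^{-(t-s)A}B(\cdot)$ depends linearly on the data. Moreover, by Lemma \ref{2.3*} the operator $S$ is well defined and bounded from $BC(\mathbb{R},X)$ into $BC(\mathbb{R},Y)$ with $\norm{S}\leqslant\tilde{L}$, so that all the quantities appearing below make sense.

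Next, given $f\in PAP(\mathbb{R},X)$, I would invoke Definition \ref{WPAAfunction} to write $f=g+\phi$ with $g\in AP(\mathbb{R},X)$ and $\phi\in PAP_0(\mathbb{R},X)$. Since both summands lie in $BC(\mathbb{R},X)$, the functions $S(g)$ and $S(\phi)$ are defined, and linearity gives $S(f)=S(g)+S(\phi)$. Now Lemma \ref{soloperator}(i) yields $S(g)\in AP(\mathbb{R},Y)$, while Lemma \ref{soloperator}(ii) yields $S(\phi)\in PAP_0(\mathbb{R},Y)$. Hence $S(f)$ is the sum of an almost periodic function and a function with vanishing mean value, so $S(f)\in PAP(\mathbb{R},Y)$ by Definition \ref{WPAAfunction}.

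For the existence and uniqueness of the pseudo almost periodic mild solution I would argue as follows. The function $\hat{u}:=S(f)$ is, by construction, the mild solution \eqref{MILD} of \eqref{NSE_linear}, and the previous paragraph shows $\hat{u}\in PAP(\mathbb{R},Y)$. Uniqueness is then inherited directly from Lemma \ref{2.3*}, which already provides a unique mild solution in the larger class $BC(\mathbb{R},Y)$; since $PAP(\mathbb{R},Y)\subset BC(\mathbb{R},Y)$, any pseudo almost periodic mild solution must coincide with $\hat{u}$. Finally, the estimate \eqref{3.4'} is precisely the bound \eqref{3.4} of Lemma \ref{2.3*} specialized to $\hat{u}=S(f)$.

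I expect no genuine obstacle at the level of this theorem: the entire analytic difficulty — the polynomial decay estimates of Assumption \ref{A}, the interpolation argument of Lemma \ref{basic_ine} leading to \eqref{L1es}, and especially the verification that $S$ preserves the vanishing mean value in Lemma \ref{soloperator}(ii) — has already been absorbed into the preceding lemmas. The only point deserving a word of care is that the decomposition of $f$ into its almost periodic and $PAP_0$ parts be respected by $S$, which is exactly why both assertions i) and ii) of Lemma \ref{soloperator} are required; one may additionally note that $PAP(\mathbb{R},X)=AP(\mathbb{R},X)\oplus PAP_0(\mathbb{R},X)$, so the two summands are unambiguously determined, although for the argument above any single such decomposition suffices.
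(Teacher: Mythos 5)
Your proposal is correct and follows essentially the same route as the paper's own proof: decompose $f=g+\phi$ via Definition \ref{WPAAfunction}, use linearity of $S$ to split $S(f)=S(g)+S(\phi)$, apply the two parts of Lemma \ref{soloperator}, and then cite Lemma \ref{2.3*} for uniqueness and the bound \eqref{3.4'}. The extra remarks you add on the linearity of $S$ and on the uniqueness of the decomposition are harmless elaborations of what the paper leaves implicit.
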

\begin{proof}
\noindent
Since $f\in PAP(\r,X)$, we put $f = g + \phi,$ where $g \in AP(\r,X)$ and $\phi\in PAA_0(\r,X)$. We have
\begin{eqnarray*}
S(f)(t) = \int_{-\infty}^t e^{-(t-\tau)A}B f(\tau)d\tau &=& \int_{-\infty}^t e^{-(t-\tau)A}B g(\tau) d\tau + \int_{-\infty}^t e^{-(t-\tau)A}B \phi(\tau) d\tau\cr
&=& S(g)(t) + S(\phi)(t).
\end{eqnarray*}
Using Lemma \ref{soloperator} we have that $S(g) \in AP(\r,Y)$ and $S(\phi)\in PAA_0(\r,Y)$. Therefore, $S(f) \in PAP(\r,Y)$ and Equation \eqref{NSE_linear} has a unique PAP-mild solution $\hat{u}$ such that \eqref{3.4'} by Lemma \ref{2.3*}. 
\end{proof}

\section{Solutions to semi-linear equations}\label{Section 3}
\subsection{The existence and uniqueness}
In this section we consider the semi-linear evolution equations
\begin{equation}\label{NSE_nonlinear}
u'\left( t \right) + Au\left( t \right) = BG\left( u \right)\left( t \right),t \in \mathbb{R},
\end{equation}
where the nonlinear part (also called the Nemytskii operator) $G$ maps from $BC(\mathbb{R},Y)$ into $BC(\mathbb{R},X)$ and $A$, $X$, $Y$ are the operator and the interpolation spaces similarly as defined in the linear equations \eqref{NSE_linear}.

A function $u\in C(\mathbb{R}, Y)$ is said to be a mild solution to $(\ref{NSE_nonlinear})$ if it satisfies the integral equation (see \cite[equation $(17)$]{HiHuySe}):
\begin{equation}\label{4.2}
u(t) = \int_{-\infty}^t {{e^{ -(t - \tau)A}}BG\left( u \right)\left( \tau  \right)d\tau. }
\end{equation}
To establish the existence and uniqueness of the PAP-mild solution for the semi-linear equation \eqref{NSE_nonlinear}, we need the following assumptions on the Nemytskii operator $G$:
\begin{assumption}\label{GAA}
We assume that the operator $G: BC\left( {\mathbb{R},Y} \right) \to BC\left( {\mathbb{R},X} \right)$ maps pseudo almost periodic functions to pseudo almost periodic functions and there is a positive constant $C$ such that the following estimate holds
$$\left\| G(u) - G(v) \right\|_{\infty,X} \leqslant C\left\| u - v \right\|_{\infty,Y}$$
for $u, v \in B(0,\rho) = \left\{ w \in BC(\mathbb{R},Y): \norm{w}_{\infty,Y} \leqslant \rho \right\}$.
\end{assumption}
The following theorem shows the existence and uniqueness of the $PAP$-mild solution to equation $(\ref{NSE_nonlinear})$ in a small ball of the Banach space $PAP(\mathbb{R},Y)$.
\begin{theorem}\label{thm2}
Suppose that $e^{-tA}B$ satisfies the polynomial estimates as in Assumption \ref{A} and Assumption $\ref{GAA}$ holds for Nemytskii operator $G$ with $C$ and $\norm{G(0)}_{\infty,X}$ being small enough. Then there exists a unique pseudo almost periodic mild solution $\hat u \in PAP(\r,Y)$ to the semi-linear equation $(\ref{NSE_nonlinear})$.
\end{theorem}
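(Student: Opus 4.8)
The plan is to recast the integral equation \eqref{4.2} as a fixed point problem and solve it by the contraction mapping principle on a small closed ball of the Banach space $PAP(\r,Y)$. Define the nonlinear map
$$\Phi(u) := S(G(u)), \qquad \Phi(u)(t) = \int_{-\infty}^t e^{-(t-\tau)A}B\,G(u)(\tau)\,d\tau,$$
where $S$ is the bounded linear solution operator from \eqref{3.8}. A function $u\in PAP(\r,Y)$ is a mild solution of \eqref{NSE_nonlinear} precisely when $\Phi(u)=u$. First I would check that $\Phi$ maps $PAP(\r,Y)$ into itself: by Assumption \ref{GAA} the Nemytskii operator $G$ sends pseudo almost periodic functions to pseudo almost periodic functions, so $G(u)\in PAP(\r,X)$ whenever $u\in PAP(\r,Y)$, and then Theorem \ref{thm1} gives $S(G(u))\in PAP(\r,Y)$. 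Thus $\Phi$ is a well-defined self-map of the Banach space $\big(PAP(\r,Y),\norm{.}_{\infty,Y}\big)$.

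Second, I would show that $\Phi$ leaves invariant the closed ball $B(0,\rho)=\{w\in PAP(\r,Y):\norm{w}_{\infty,Y}\le\rho\}$, which is contained in the ball of $BC(\r,Y)$ on which the Lipschitz estimate of Assumption \ref{GAA} is valid. Using the bound $\norm{S}\le\tilde{L}$ recorded after Lemma \ref{2.3*} together with
$$\norm{G(u)}_{\infty,X}\le\norm{G(u)-G(0)}_{\infty,X}+\norm{G(0)}_{\infty,X}\le C\rho+\norm{G(0)}_{\infty,X}$$
for $u\in B(0,\rho)$, I obtain $\norm{\Phi(u)}_{\infty,Y}\le\tilde{L}\big(C\rho+\norm{G(0)}_{\infty,X}\big)$. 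Hence the ball is invariant as soon as $\tilde{L}C<1$ and $\tilde{L}\norm{G(0)}_{\infty,X}\le(1-\tilde{L}C)\rho$, which is exactly where the smallness hypotheses on $C$ and on $\norm{G(0)}_{\infty,X}$ enter.

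Third, I would establish the contraction property. Since $S$ is linear, for $u,v\in B(0,\rho)$ we have
$$\norm{\Phi(u)-\Phi(v)}_{\infty,Y}=\norm{S\big(G(u)-G(v)\big)}_{\infty,Y}\le\tilde{L}\norm{G(u)-G(v)}_{\infty,X}\le\tilde{L}\,C\,\norm{u-v}_{\infty,Y}.$$
With $\tilde{L}C<1$ the map $\Phi$ is a strict contraction on the complete metric space $B(0,\rho)$, so the Banach fixed point theorem yields a unique fixed point $\hat{u}\in B(0,\rho)$, which is the desired unique $PAP$-mild solution of \eqref{NSE_nonlinear}.

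The argument is a standard contraction scheme, so no single step is genuinely deep; the point requiring care is the bookkeeping in the second step, namely choosing $\rho$ compatibly with the radius on which Assumption \ref{GAA} holds and calibrating the thresholds for $C$ and $\norm{G(0)}_{\infty,X}$ so that invariance ($\tilde{L}(C\rho+\norm{G(0)}_{\infty,X})\le\rho$) and contraction ($\tilde{L}C<1$) hold simultaneously. I would also flag explicitly that the uniqueness obtained here is a priori local, i.e.\ within the ball $B(0,\rho)$.
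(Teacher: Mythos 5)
Your proposal is correct and follows essentially the same route as the paper: both recast \eqref{4.2} as a fixed point problem for $\Phi=S\circ G$ on the ball $B^{PAP}(0,\rho)$, use Assumption \ref{GAA} together with Theorem \ref{thm1} for $PAP$-invariance, and the bound $\norm{S}\leq\tilde{L}$ with the Lipschitz estimate to get ball invariance and contraction under the smallness hypotheses. Your explicit calibration of the thresholds ($\tilde{L}C<1$ and $\tilde{L}\norm{G(0)}_{\infty,X}\leq(1-\tilde{L}C)\rho$) and your remark that uniqueness is local to the ball match the paper's argument, which likewise only asserts uniqueness among solutions with $\norm{u}_{\infty,Y}\leq\rho$.
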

\begin{proof}
We denote by $B^{PAP}(0, \rho):=\{v\in PAP(\mathbb{R},Y)\mid \norm{v}_{\infty,Y}\leq \rho\}$ the ball centered at $0$ with radius $\rho >0$ in the space $PAP(\mathbb{R},Y)$.  

For each $v\in B^{PAP}(0,\rho)$ we consider the linear equation
\begin{equation}
u'(t)+Au(t) = BG(v)(t)
\end{equation}
Using Theorem \ref{thm1}, the above equation has a unique PAP-mild solution defined by
\begin{equation}
u(t)=\int_{-\infty}^t e^{-(t-\tau)A}BG(v)(\tau)d\tau,\, t\in \mathbb{R},
\end{equation}
and we can define the solution operator (see \eqref{3.8}) as
\begin{equation}
S(G(v))(t):=u(t).
\end{equation}
Then, we define the mapping $\Phi$ by
\begin{align*}
v &\longmapsto S\left( {G\left( v \right)} \right).
\end{align*} 
Next, we prove that $\Phi$ maps $B^{PAP}(0, \rho)$ into itself and is a contraction mapping on $B(0, \rho)$. Indeed, we  have
$$\Phi (v)(t) = S(G(v))(t) = \int_{-\infty}^t  e^{-(t-\tau)A}BG(v)(\tau) d\tau.$$
Since $G(v)$ belongs to $PAP(\mathbb{R},X)$ for $u\in PAP(\mathbb{R},Y)$, from Theorem \ref{thm1} it follows that $S(G(u))\in PAP(\mathbb{R},Y)$. Therefore, $\Phi$ maps $PAP(\mathbb{R},Y)$ into itself.
Moreover, applying Lemma \ref{2.3*} and using Assumption \ref{GAA} we can estimate that
\begin{eqnarray*}
\norm{\Phi (v)}_{\infty, Y} &\leqslant& \tilde{L} \norm{G(v)}_{\infty,X}=
\tilde{L} \norm{G(0)+G(v)-G(0)}_{\infty,X}\cr
&\leqslant& \tilde{L} \left( \norm{G(0)}_{\infty,X} + C\norm{v}_{\infty,Y} \right) \cr
&\leqslant& \tilde{L}(\norm{G(0)}_{\infty,X}+C\rho)<\rho,
\end{eqnarray*}
for $C$ and $\norm{G(0)}_{\infty,X}$ small enough. Therefore, $\Phi$ maps from $B^{PAP}(0,\rho)$ into itself.

We now show that $\Phi$ is a contraction mapping. Indeed, let $u$ and $v$ belong to the ball $B(0, \rho)$. Applying again Lemma \ref{2.3*} and Assumption \ref{GAA} we can estimate that
\begin{eqnarray*}
\left\| \Phi (u) - \Phi (v) \right\|_{\infty,Y} &\leqslant& \tilde L\left\| G(u) - G(v) \right\|_{\infty,X}\cr
&\leqslant& \tilde {L}C\left\| u - v \right\|_{\infty, Y},
\end{eqnarray*}
for all $u, v \in B^{PAP}(0, \rho)$. Hence, $\Phi$ is a contraction mapping for the small enough constant $C$. 

Then, the contraction principle yields the existence of a unique fixed point $\hat{u} \in B^{PAP}(0,\rho)$ of $\Phi$. By definition of $\Phi$ we have that $\hat u$ is PAP-mild solution of the semi-linear equation \eqref{4.2}.

In order to prove the uniqueness, we assume that $u, v \in PAP(\mathbb{R},Y)$ are two PAP-mild solutions to \eqref{4.2} such that $\norm{u}_{\infty,Y}\leqslant \rho$ and $\norm{v}_{\infty,Y}\leqslant \rho$. Then, using Lemma \ref{2.3*} and Assumption \ref{GAA} we have
\begin{eqnarray*}
\left\| {u - v} \right\|_{\infty,Y} &=& \left\| S\left( {G\left( u \right) - G\left( v \right)} \right) \right\|_{\infty,Y} \cr
&\leqslant& \tilde {L}\left\| {G\left( u \right) - G\left( v \right)} \right\|_{\infty,Y}\cr
&\leqslant& \tilde {L}C\left\| {u - v} \right\|_{\infty,Y}.
\end{eqnarray*}
Since $\tilde {L}C< 1$ for $C$ small enough,  we have the uniqueness of PAP-mild solution $\hat{u}$ in the ball $B^{PAP}(0,\rho)$.
\end{proof}

\subsection{Stability of the solutions}
In this section, we extend the results of polynomial stability of the mild solutions of the fluid dynamic equations obtained in \cite{HuyHaXuan} to the semi-linear equation \eqref{NSE_nonlinear} on the abstract interpolation spaces. In particular, we need some further abstract assumptions on the operator  $e^{-t A}B$ and the Nemytskii operator $G$ as in Assumption \ref{assum} below to establish the stability of the PAP-mild solution obtained in the previous section. 
\begin{assumption}\label{assum}
We assume that:
\begin{enumerate}
\item[i)] There exist positive numbers $0<\beta_1<1<\beta_2$ and Banach spaces $T,Q_1,Q_2,K_1,K_2$ such that $K_i$ is a pre-dual space of $Q_i$, $K_1\cap K_2$ is dense in $K_i$ for $i=1,2$, $e^{-tA}$ is a $C_0$-semigroup on $Q_1$ and $Q_2$ and we have the following polynomial estimates
\begin{equation}\label{1est}
\begin{gathered}
  \left\|e^{-t A}B\psi\right\|_{Q_1}\leqslant\tilde{M}t^{-\beta_1}\left\|\psi\right\|_T,\ t > 0, \hfill \\
  \left\|e^{-t A}B\psi\right\|_{Q_2}\leqslant\tilde{M}t^{-\beta_2}\left\|\psi\right\|_T,  \ t > 0, 
\end{gathered}
\end{equation}
for some constant $\tilde{M} > 0$ independent of $t$ and $\psi$.
\item[ii)] Putting $Q:= (Q_1, Q_2)_{\tilde{\theta}, \infty}$, where $0<\tilde{\theta}<1$ satisfied
$(1-\tilde{\theta})\beta_1 + \tilde{\theta}\beta_2=1$, there exist
positive constants $0 < \gamma < 1$ and $C_1 > 0$ such that 
\begin{equation}\label{2est}
\norm{e^{-tA}\psi}_Q \leq C_1 t^{-\gamma}\norm{\psi}_Y,\, t>0.
\end{equation}
\item[iii)] For the radius $\rho$ as in Assumption \ref{GAA} there exists $C_2 > 0$ such that the Nemytskii operator G satisfies
\begin{equation*}
\norm{G(v_1)-G(v_2)}_{\infty,T} \leq C_2 \norm{v_1-v_2}_{\infty,Q},
\end{equation*}
for all $v_1, \, v_2 \in B(0,\rho) \cap BC(\mathbb{R},Q) = \left\{ v \in BC(\r,Y) \cap BC(\mathbb{R},Q): \norm{v}_{\infty,Y} \leq \rho \right\}$.
\end{enumerate}
\end{assumption}
Now we extend \cite[Theorem 2.5]{HuyHaXuan} to state and prove the polynomial stability of the PAP-mild solution obtained in Theorem \ref{thm2} in the following theorem:
\begin{theorem}\label{Stabilitytheorem} 
Assume that $e^{-tA}B$ satisfies polynomial estimates as in Assumption \ref{A}, $G$ satisfies Assumption \ref{GAA} and $e^{-tA},\, B$, $G$ satisfy Assumption \ref{assum} with $C_2$ being small
enough. Let $\hat{u}\in PAP(\mathbb{R}, Y)$ be the pseudo almost periodic mild solution of \eqref{4.2} obtained in Theorem \ref{thm2}. Then, $\hat{u}$ is polynomial stable in the sense that: for any bounded mild solution $u \in BC(\mathbb{R}, Y)$ of the semi-linear equation \eqref{4.2}, if $\norm{\hat{u}}_{\infty,Y}$ and $\norm{u(0)-\hat{u}(0)}_Y$ are sufficiently small, then 
\begin{equation}\label{Stability_ine}
\left\|u(t)-\hat{u}(t)\right\|_Q \leqslant Dt^{-\gamma} \hbox{ for all }t>0,
\end{equation}
where $D$ is a positive constant independent of $u$ and $\hat{u}$.
\end{theorem}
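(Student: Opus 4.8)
The plan is to reduce the statement to a weighted decay estimate for the difference $w := u - \hat u$ and to close that estimate by feeding the polynomial bounds of Assumption \ref{assum} into a Duhamel identity, absorbing the nonlinear interaction through the smallness hypotheses. First I would re-anchor both mild solutions at time $0$: starting from the representation \eqref{4.2} and splitting the integral at $\tau=0$, the semigroup law turns $\int_{-\infty}^0 e^{-(t-\tau)A}BG(u)(\tau)\,d\tau$ into $e^{-tA}u(0)$, so that for $t>0$
\begin{equation*}
u(t)=e^{-tA}u(0)+\int_0^t e^{-(t-\tau)A}BG(u)(\tau)\,d\tau,
\end{equation*}
and likewise for $\hat u$. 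Subtracting yields the key identity
\begin{equation*}
w(t)=e^{-tA}w(0)+\int_0^t e^{-(t-\tau)A}B\big[G(u)(\tau)-G(\hat u)(\tau)\big]\,d\tau,\qquad t>0,
\end{equation*}
which cleanly separates the initial perturbation $w(0)=u(0)-\hat u(0)$ from the nonlinear term.

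Next I would estimate both terms in the norm of $Q$. The linear term is immediate from Assumption \ref{assum}(ii): $\|e^{-tA}w(0)\|_Q\le C_1 t^{-\gamma}\|w(0)\|_Y$, which already carries the target weight $t^{-\gamma}$. For the nonlinear term I would use Assumption \ref{assum}(iii) to pass to the $T$-norm of the difference of the nonlinearities, $\|G(u)(\tau)-G(\hat u)(\tau)\|_T\le C_2\|w(\tau)\|_Q$; this requires $u,\hat u\in B(0,\rho)\cap BC(\mathbb{R},Q)$, and it is precisely here that the smallness of $\|\hat u\|_{\infty,Y}$ and of $\|w(0)\|_Y$ is used, to keep both solutions in the admissible ball throughout the bootstrap. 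The operator $e^{-(t-\tau)A}B$ acting from $T$ is then controlled by the two estimates \eqref{1est}; interpolating them into $Q=(Q_1,Q_2)_{\tilde\theta,\infty}$ exactly as in the proof of Lemma \ref{basic_ine}, and using the relation $(1-\tilde\theta)\beta_1+\tilde\theta\beta_2=1$, produces a bound of the form $\|e^{-sA}B\psi\|_Q\le C\,s^{-1}\|\psi\|_T$.

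The heart of the argument, and the step I expect to be the main obstacle, is converting these ingredients into a closed bound for $N:=\sup_{t>0}t^{\gamma}\|w(t)\|_Q$. Inserting the estimates gives
\begin{equation*}
t^{\gamma}\|w(t)\|_Q\le C_1\|w(0)\|_Y + CC_2\,t^{\gamma}\!\int_0^t (t-\tau)^{-1}\|w(\tau)\|_Q\,d\tau,
\end{equation*}
and the convolution kernel $s^{-1}$ sits exactly at the borderline, non-integrable exponent: a naive substitution $\|w(\tau)\|_Q\le N\tau^{-\gamma}$ yields the divergent integral $\int_0^t(t-\tau)^{-1}\tau^{-\gamma}\,d\tau$. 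To circumvent this I would avoid the fully interpolated $Q$-kernel and instead retain the two endpoint estimates \eqref{1est}, with exponents $\beta_1<1$ and $\beta_2>1$, interpreting the time-convolution through Lorentz spaces in the variable $t$: the profile $t\mapsto\|w(t)\|_Q$ is sought in the weak space $L^{1/\gamma,\infty}(0,\infty)$, the kernels $s^{-\beta_i}$ lie in $L^{1/\beta_i,\infty}(0,\infty)$, and a convolution inequality of O'Neil type, together with the interpolation functor of Theorem \ref{GIT} applied to the solution operator (landing the time-integrability at $L^1$ while the spatial values land in $Q$), provides a bound $N\le C_1\|w(0)\|_Y + C'C_2\,N$ with a \emph{finite} constant $C'$. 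The weak-$L^1$ endpoint of the kernel is the delicate point requiring care. Finally, choosing $C_2$ small so that $C'C_2<1$ lets me absorb the nonlinear contribution and conclude $N\le (1-C'C_2)^{-1}C_1\|w(0)\|_Y=:D$, that is $\|u(t)-\hat u(t)\|_Q\le Dt^{-\gamma}$ for all $t>0$, which is \eqref{Stability_ine}.
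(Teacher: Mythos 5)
Your setup coincides with the paper's: re-anchoring both solutions at $t=0$ via Duhamel, subtracting to obtain the integral equation for $w=u-\hat u$, bounding the linear term by Assumption \ref{assum} ii) and the nonlinearity by Assumption \ref{assum} iii); and you correctly spot the real difficulty, namely that the interpolated kernel bound $\norm{e^{-sA}B\psi}_{Q}\lesssim s^{-1}\norm{\psi}_T$ sits at a non-integrable borderline. But your proposed repair does not close. The paper resolves the borderline not by a Lorentz-space convolution inequality in the time variable but by the elementary splitting $\int_0^t=\int_0^{t/2}+\int_{t/2}^t$ after dualizing against $\psi\in(K_1,K_2)_{\tilde\theta,1}$: on $(0,t/2)$, where the weight $\norm{w(t-\tau)}_Q\leqslant (t/2)^{-\gamma}\sup_s s^{\gamma}\norm{w(s)}_Q$ is harmless, it uses the $L^1(0,\infty)$ bound $\int_0^\infty\norm{B'e^{-\tau A'}\psi}_{T'}d\tau\leqslant\tilde N\norm{\psi}$ obtained by interpolating \eqref{11est} exactly as in Lemma \ref{basic_ine}; on $(t/2,t)$, where the weight $(t-\tau)^{-\gamma}$ is singular but integrable since $\gamma<1$, it uses the pointwise bound $\norm{B'e^{-\tau A'}\psi}_{T'}\leqslant C'\tau^{-1}\norm{\psi}\leqslant C'(2/t)\norm{\psi}$. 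Each piece contributes $O(t^{-\gamma})$. Your alternative --- placing the profile $t\mapsto\norm{w(t)}_Q$ in $L^{1/\gamma,\infty}(0,\infty)$ and invoking an O'Neil-type convolution bound --- would at best control the distribution function of that profile, i.e.\ its weak-$L^{1/\gamma}$ quasinorm; that is strictly weaker than the pointwise decay $\norm{w(t)}_Q\leqslant Dt^{-\gamma}$ asserted in \eqref{Stability_ine}, and the quantity $N=\sup_{t>0}t^{\gamma}\norm{w(t)}_Q$ you want to absorb dominates, but is not dominated by, that quasinorm. So the step you yourself flag as delicate is where the argument breaks, and the fix is the $t/2$ splitting, not time-interpolation.

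There is a second structural gap: the inequality $N\leqslant C_1\norm{w(0)}_Y+C'C_2N$ yields $N\leqslant(1-C'C_2)^{-1}C_1\norm{w(0)}_Y$ only if you already know $N<\infty$. Nothing in the hypotheses gives that: $u$ is merely a bounded $Y$-valued mild solution, no embedding of $Y$ into $Q$ is assumed, so $\sup_{t>0}t^{\gamma}\norm{w(t)}_Q$ could a priori be infinite and the absorption is vacuous. The paper avoids this circularity by working in the Banach space $\mathbf{M}=\{v\in BC(\mathbb{R}_+,Y):\ \sup_{t>0}t^{\gamma}\norm{v(t)}_Q<\infty\}$ and running a genuine contraction-mapping argument for
\begin{equation*}
\Phi(v)(t)=e^{-tA}(u_0-\hat u_0)+\int_0^{t}e^{-(t-\tau)A}B\bigl(G(v+\hat u)(\tau)-G(\hat u)(\tau)\bigr)d\tau
\end{equation*}
on a small ball of $\mathbf{M}$, identifying the fixed point with $u-\hat u$ afterwards. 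You should either restructure your argument as such a fixed-point scheme or separately justify $N<\infty$ before absorbing.
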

\begin{proof}
For $t>0$ we can rewrite $u(t)$ and $\hat{u}(t)$  as follows:
$$u(t)=e^{-tA}u(0) + \int_0^t e^{-(t-\tau)A}BG(u)(\tau)d\tau,$$
$$\hat{u}(t)=e^{-tA}\hat{u}(0) + \int_0^t e^{-(t-\tau)A}BG(\hat{u})(\tau)d\tau,$$
where 
$$u(0) = \int_{-\infty}^0 e^{\tau A}BG(u)(\tau)d\tau \hbox{ and } \hat{u}(0) = \int_{-\infty}^0 e^{\tau A}BG(\hat{u})(\tau)d\tau.$$

By putting $v=u-\hat{u}$ we obtain that $v$ satisfies the integral equation
\begin{equation}\label{EqSta}
 v(t)= e^{-tA}(u(0) - \hat{u}(0)) + \int_0^{t} e^{-(t-\tau)A}B(G(u)(\tau)-G(\hat{u})(\tau))d\tau.
\end{equation} 
We set $\mathbf{M}:=\{v\in BC(\mathbb{R}_+, Y): \, \mathop {\sup }\limits_{t \in \mathbb{R}_+} {t^{\gamma}\left\|v(t)\right\|_Q}< \infty\}$
endowed with the norm 
$$\left\|v\right\|_\mathbf{M}: = \left\|v\right\|_{BC(\mathbb{R}_+, Y)} + \mathop {\sup }\limits_{t \in \mathbb{R}_+} {t^{\gamma}\left\|v(t)\right\|_Q}.$$

Putting $u(0):=u_0$ and $\hat u(0):=\hat u_0$ we now prove that if $\norm{\hat{u}}_{\infty,Y}$ (hence $\left\|\hat{u}\right\|_{BC(\mathbb{R}_+, Y)}$) and $\norm{u_0-\hat{u}_0}_Y$ are small enough, then equation \eqref{EqSta} has a unique solution on a small ball of $\mathbf{M}$. 
Indeed, for $v\in \mathbf{M}$, we consider the mapping
$$\Phi(v)(t):= e^{-tA}(u_0-\hat{u}_0) + \int_0^{t}e^{-(t-\tau)A}B\left(G(v+\hat{u})(\tau)-G(\hat{u})(\tau)\right)d\tau. $$
Setting ${\bf B}(0,\rho):= \{v\in \mathbf{M} : \left\|v\right\|_{\mathbf{M}}\leqslant \rho\}$ we  
 prove that for sufficiently small $\norm{\hat{u}}_{\infty,Y}$, $\norm{\hat{u}_0 - u_0}_Y$ and $C_2$, the map $\Phi$ acts from ${\bf B}(0,\rho)$ to itself and is a contraction mapping. Clearly, $\Phi(v)\in BC(\mathbb{R}_+, Y)$ for $v\in BC(\mathbb{R}_+, Y)$. Moreover
\begin{eqnarray*}
t^{\gamma}\Phi(v)(t) &=& t^{\gamma}e^{-tA}(u_0-\hat{u}_0)+ t^{\gamma}\int_0^t e^{-(t-\tau)A}B\left(G(v+\hat{u})(\tau)-G(\hat{u})(\tau)\right)d\tau\cr
&=& t^{\gamma}e^{-tA}(u_0-\hat{u}_0)+ t^{\gamma}\int_0^t e^{-\tau A}B\left(G(v(t-\tau)+\hat{u}(t-\tau))-G(\hat{u}(t-\tau))\right)d\tau\cr
&=& t^{\gamma}e^{-tA}(u_0-\hat{u}_0) + t^{\gamma}\int_0^t{F(\tau)d\tau}\,,
\end{eqnarray*}
where 
$$F(\tau):=e^{-\tau A}B\left(G(v(t-\tau)+\hat{u}(t-\tau))-G(\hat{u}(t-\tau))\right).$$
By inequality \eqref{2est} in Assumption \ref{assum} $ii)$ we have
\begin{equation}\label{Ine0}
\norm{t^{\gamma}e^{-tA}(u_0-\hat{u}_0)}_Q = t^{\gamma}\norm{e^{-tA}(u_0-\hat{u}_0)}_Q \leq C_1\norm{u_0 - \hat{u}_0}_Y.
\end{equation}

By Assumption \ref{assum} $i)$, we have $(K_1,K_2)'_{\tilde{\theta},1} = (Q_1,Q_2)_{\tilde{\theta},\infty}=Q$. We denote by $\left<.,.\right>$ the dual pair between $Q$ and $(K_1,K_2)'_{\tilde{\theta},1}$. The for each $\psi\in (K_1, K_2)_{\tilde{\theta}, 1}$ we have
\begin{eqnarray}
\left|\left<\int_{0}^t{F(\tau)d\tau}, \psi \right>\right| &\leqslant &\int_{0}^t\left|\left<F(\tau), \psi \right>\right| d\tau\cr
&\leqslant &\int_{0}^{t/2}\left|\left<F(\tau), \psi \right>\right| d\tau + \int_{t/2}^t\left|\left<F(\tau), \psi \right>\right| d\tau\,.\label{Eqi}
\end{eqnarray}
Since inequalities in \eqref{1est} in Assumption \ref{assum} $i)$, we have the following inequalities on the dual spaces
\begin{equation}\label{11est}
\begin{gathered}
  \left\|B'e^{-t A'}\psi\right\|_{T'}\leqslant\tilde{M}t^{-\beta_1}\left\|\psi\right\|_{Q_1'},\, t > 0, \hfill \\
  \left\|B'e^{-t A'}\psi\right\|_{T'}\leqslant\tilde{M}t^{-\beta_2}\left\|\psi\right\|_{Q_2'}, \, t > 0, 
\end{gathered}
\end{equation}
Therefore, by the same way as in the proof of Lemma \ref{inequalities_basic}, we can establish that
$$ \int_{0}^{\infty}\left\|B'e^{-\tau A'}\psi\right\|_{T'} d\tau\leqslant\tilde{N}\left\|\psi\right\|_{(K_1, K_2)_{\tilde{\theta}, 1}}.$$

Since $\norm{\hat{u}}_{\infty,Y}$ is small enough and $\norm{v}_{\infty,Y}<\rho$ we can consider $\norm{\hat{u}}_{\infty,Y}<\rho$ small enough such that $\norm{v+\hat{u}}_{\infty,Y}\leqslant \rho$. By using the Lipschitz property of $G$ in Assumption \ref{assum} $iii)$ the first integral in \eqref{Eqi} can be estimated as 
\begin{eqnarray}\label{F}
\int_{0}^{t/2}\left|\left<F(\tau), \psi \right>\right|d\tau&\leqslant&\int_{0}^{t/2}\left\|G(v(t-\tau)+\hat{u}(t-\tau))-G(\hat{u}(t-\tau)) \right\|_T \left\|B'e^{-\tau A'}\psi\right\|_{T'} d\tau\cr
&\leqslant&\int_{0}^{t/2}C_2\left\|v(t-\tau)\right\|_{Q}\left\|B'e^{-\tau A'}\psi\right\|_{T'} d\tau\cr
&\leqslant& \left(\frac{t}{2}\right)^{-\gamma}C_2\left\|v\right\|_{\bf M}\int_{0}^{\infty}\left\|B'e^{-\tau A'}\psi\right\|_{T'} d\tau\cr
&\leqslant&\tilde{N} 2^{\gamma} t^{-\gamma}C_2\left\|v\right\|_{\bf M}\left\|\psi\right\|_{(K_1, K_2)_{\tilde{\theta}, 1}}.
\end{eqnarray}

By \eqref{11est} and the fact that the canonical embedding $K_i \to Q_i' \,(i=1,2)$  is an isometry
we have that
\begin{eqnarray*}
\left\|B'e^{-\tau A'}\psi\right\|_{T'} &\leqslant& \tilde{M}\tau^{-\beta_1}\left\|\psi\right\|_{K_1},\cr
\left\|B'e^{-\tau A'}\psi\right\|_{T'} &\leqslant& \tilde{M}\tau^{-\beta_2}\left\|\psi\right\|_{K_2}
\end{eqnarray*}
for $\tau>0$. By applying Theorem \ref{GIT} with noting that $(1-\tilde{\theta})\beta_1 + \tilde{\theta}\beta_2=1$ and $(T',T')_{\tilde{\theta},1} = T'$ we obtain that
$$\norm{B'e^{-\tau A'}\psi}_{T'}<C' \tau^{-1}\norm{\psi}_{(K_1,K_2)_{\tilde{\theta},1}}$$
for some constant $C'>0$. 

Now the second integral in \eqref{Eqi} can be estimated as follows:
\begin{eqnarray}\label{S}
\int_{t/2}^t\left|\left<F(\tau), \psi \right>\right| d\tau &\leqslant&\int_{t/2}^t\left\|G(v(t-\tau)+\hat{u}(t-\tau))-G(\hat{u}(t-\tau))\right\|_{T}\left\|B'e^{-\tau A'}\psi\right\|_{T'} d\tau \cr
&\leqslant&C_2\left\|v\right\|_{\bf M}\int_{t/2}^t(t-\tau)^{-\gamma}\left\|B'e^{-\tau A'}\psi\right\|_{T'} d\tau\cr
&\leqslant& C'C_2\left\|v\right\|_{\bf M} \left( \int_{t/2}^t(t-\tau)^{-\gamma}\tau^{-1} d\tau \right)\left\|\psi\right\|_{(K_1, K_2)_{\tilde{\theta}, 1}}\cr
&\leqslant& C'C_2\left\|v\right\|_{\bf M} \frac{2}{t}\int_{t/2}^t(t-\tau)^{-\gamma} d\tau\left\|\psi\right\|_{(K_1, K_2)_{\tilde{\theta}, 1}} \cr
&\leqslant& \frac{C'2^\gamma }{1-\gamma}t^{-\gamma}C_2\left\|v\right\|_{\bf M}\left\|\psi\right\|_{(K_1, K_2)_{\tilde{\theta}, 1}}.
\end{eqnarray}
By combining inequalities \eqref{F} and \eqref{S}, we obtain that 
\begin{eqnarray}\label{Ine1}
&&\left\|\int_0^t e^{-\tau A}B(G(v(t-\tau)+\hat{u}(t-\tau))-G(\hat{u}(t-\tau)))d\tau\right\|_{Q}\cr
&\leqslant& \left( \frac{C'}{1-\gamma} + \tilde{N} \right)2^\gamma t^{-\gamma}C_2\left\|v\right\|_{\mathbf{M}},
\end{eqnarray}
for all $t>0$. 

Combining now \eqref{Ine0} and \eqref{Ine1} we obtain that 
$$ \left\|\Phi(v)\right\|_{\mathbf{M}}\leqslant C_1\norm{u_0-\hat{u}_0}_Y + D'C_2\left\|v\right\|_{\mathbf{M}}$$
for $D' =  \left( \frac{C'}{1-\gamma} + \tilde{N} \right)2^\gamma >0$. 
Therefore if $\norm{u_0-\hat{u}_0}_Y$, $\left\|\hat{u}\right\|_{\infty,Y}$
and $C_2$ are small enough, the mapping $\Phi$ acts the ball ${\bf B}(0,\rho)$ into itself.

Similarly as above, we have the following estimate
$$ \left\|\Phi(v_1)-\Phi(v_2)\right\|_{\mathbf{M}}
\leqslant 2D'C_2\left\|v_1-v_2\right\|_{\mathbf{M}}.$$
Therefore, $\Phi$ is a contraction mapping for sufficiently small 
$\norm{u_0-\hat{u}_0}_Y$, $\left\|\hat{u}\right\|_{\infty, Y}$ and $C_2$.
As the fixed point of $\Phi$, the function $v = u - \hat{u}$ belongs to $\mathbf{M}$. Inequality \eqref{Stability_ine} hence follows, and we obtain the stability of the small solution $\hat{u}$. The proof is completed.
\end{proof}

\section{Heat equations with rough coefficients}\label{Section 4}
In this section we will apply the abstract results obtained in the previous sections to the semi-linear diffusion equations with rough coefficients. Consider a measurable function $b : \mathbb{R}^d \to \mathbb{C}$ satisfying $b\in L^\infty(\mathbb{R}^d)$ and such that $\mathrm{Re}\,\, b \geqslant \delta > 0$ for some $\delta > 0$. Consider the semi-linear diffusion equations with rough coefficients
\begin{equation}\label{Rou1}
u'(t) -  b\Delta u(t) = g(t,u), \, \, \, (t,x)\in \mathbb{R}\times \mathbb{R}^d,
\end{equation}
where $d\geqslant 2$, $g(t,u) = |u(t)|^{m-1}u(t) + F(t)$ for some fixed constant $m \in \mathbb{N}, m\geqslant 2$ and a given bounded (on $\mathbb{R}$) function $F$. 

We know that the operator $-A$ defined on $L^p(\mathbb{R}^d)$ by $Au := -b\Delta u$ generates a bounded analytic semigroup (also called ultracontractive semigroup) $T(t):= e^{-tA}$ on $L^p(\mathbb{R}^d)$ for all $1<p<\infty$ (for more details see \cite[Section 7.3.2]{Are04}) such that 
$$(T(t)f)(x) = \int_{\mathbb{R}^d}K(t,x,y)f(y)dy, \, \, \, t>0 \hbox{  and a.e  } x,y \in \mathbb{R}^d,$$
where $K(t,x,y)$ is the heat kernel which verifies the following Gaussian estimate (see \cite[Section 7.4]{Are04}):
\begin{equation}\label{GaussEs}
|K(t,x,y)| \leq \frac{M}{t^{d/2}}e^{-\frac{a|x-y|^2}{bt}},\,\,\, x,y \in \mathbb{R}^d,
\end{equation}
for some constants $M,a,b>0$. The semi-linear equation \eqref{Rou1} can be rewritten as
\begin{equation}\label{Rou2}
u'(t) + Au(t) = g(t,u), \, \, \, (t,x)\in \mathbb{R}\times\mathbb{R}^d.
\end{equation}
The corresponding linear equation is
\begin{equation}\label{LRou}
u'(t) + Au(t) = F(t), \,\,\, (t,x) \in \mathbb{R}\times \mathbb{R}^d.
\end{equation}
The Gaussian estimate \eqref{GaussEs} of the heat kernel $K(t,x,y)$ allows us to verify the $L^p-L^q$ smoothing properties of the ultracontractive semigroup $e^{-tA}$ as follows:
\begin{equation}\label{API0}
\norm{e^{-tA}x}_{L^{q}(\mathbb{R}^d)} \leq Ct^{-\frac{d}{2}\left( \frac{1}{p} - \frac{1}{q} \right)}\norm{x}_{L^{p}(\mathbb{R}^d)} \hbox{  where  } 1<p\leq q <+\infty.
\end{equation}
We establish the $L^{p,r}-L^{q,r}$-smoothing estimates of $e^{-tA}$ in the following lemma.
\begin{lemma}
Let $1 < q < \infty$ and $1\leq r \leq +\infty$. Then, for $1 <p \leq q < +\infty$ the following inequality holds 
\begin{equation}\label{API}
\norm{e^{-tA}x}_{L^{q,r}(\mathbb{R}^d)} \leq Ct^{-\frac{d}{2}\left( \frac{1}{p} - \frac{1}{q} \right)}\norm{x}_{L^{p,r}(\mathbb{R}^d)}.
\end{equation}
\end{lemma}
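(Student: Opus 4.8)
The plan is to deduce the Lorentz estimate \eqref{API} from the $L^p$--$L^q$ smoothing estimate \eqref{API0} by real interpolation, using the General Interpolation Theorem \ref{GIT} together with the identity $(L^{p_0},L^{p_1})_{\theta,r}=L^{p,r}$ recalled in the Preliminaries. Fix $q,r$ and $1<p\le q<\infty$, and set $\delta:=\frac1p-\frac1q\ge 0$. First I would choose endpoint exponents: pick $p_0,p_1$ with $1<p_0<p<p_1<\infty$ lying so close to $p$ that $\delta<\frac1{p_i}<1$ for $i=0,1$, together with the unique $\theta\in(0,1)$ satisfying $\frac1p=\frac{1-\theta}{p_0}+\frac{\theta}{p_1}$. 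Then I would define $q_0,q_1$ by keeping the gap constant, namely $\frac1{q_i}:=\frac1{p_i}-\delta$; the condition $\delta<\frac1{p_i}<1$ forces $1<q_i<\infty$, while $\delta\ge0$ gives $p_i\le q_i$, so each pair $(p_i,q_i)$ is admissible for \eqref{API0}.

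The constant-gap prescription is chosen to serve two purposes. On the one hand, a direct computation from $\frac1{q_i}=\frac1{p_i}-\delta$ shows $\frac1q=\frac{1-\theta}{q_0}+\frac{\theta}{q_1}$ with the \emph{same} $\theta$, and that $1<q_0<q<q_1<\infty$ (the map $p_i\mapsto q_i$ is increasing); hence both interpolation identities $(L^{p_0},L^{p_1})_{\theta,r}=L^{p,r}$ and $(L^{q_0},L^{q_1})_{\theta,r}=L^{q,r}$ are available. On the other hand, the exponent of $t$ in \eqref{API0} is $\frac d2\left(\frac1{p_i}-\frac1{q_i}\right)=\frac d2\delta$ for both $i=0,1$, so for each fixed $t>0$ the linear (hence sublinear) operator $e^{-tA}$ obeys $\norm{e^{-tA}}_{L^{p_i}\to L^{q_i}}\le Ct^{-\frac d2\delta}$.

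Next I would apply Theorem \ref{GIT} to the couples $(L^{p_0},L^{p_1})$ and $(L^{q_0},L^{q_1})$ with $T=e^{-tA}$ ($t$ fixed) and $M_0=M_1=Ct^{-\frac d2\delta}$. This yields that $e^{-tA}:(L^{p_0},L^{p_1})_{\theta,r}\to(L^{q_0},L^{q_1})_{\theta,r}$ is sublinear with quasi-norm at most $M_0^{1-\theta}M_1^{\theta}=Ct^{-\frac d2\delta}$. Identifying the interpolation spaces via the Lorentz identity then gives exactly $\norm{e^{-tA}x}_{L^{q,r}}\le Ct^{-\frac d2\delta}\norm{x}_{L^{p,r}}=Ct^{-\frac d2\left(\frac1p-\frac1q\right)}\norm{x}_{L^{p,r}}$, which is \eqref{API}. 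The degenerate case $p=q$ (i.e. $\delta=0$) is covered as well: there $q_i=p_i$ and the same argument delivers the uniform boundedness of $e^{-tA}$ on $L^{p,r}$.

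There is no genuine analytic obstacle here; the entire content lies in arranging the endpoints so that the power of $t$ survives interpolation unchanged. The one point demanding care is the bookkeeping, namely checking that a single parameter $\theta$ is simultaneously compatible with the two interpolation identities (this is precisely what the prescription $\frac1{q_i}=\frac1{p_i}-\delta$ guarantees) and that the final constant $C$, while depending on the auxiliary data $p_0,p_1,\theta,d$, is independent of $t$ and $x$.
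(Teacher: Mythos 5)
Your proof is correct and follows essentially the same route as the paper: both deduce \eqref{API} from the $L^p$--$L^q$ estimate \eqref{API0} by applying Theorem \ref{GIT} to suitable Lebesgue endpoints and identifying the resulting real interpolation spaces as Lorentz spaces. The only difference is bookkeeping: you choose the endpoint exponents so that the gap $\frac{1}{p_i}-\frac{1}{q_i}$ is constant, making both endpoint quasi-norms exactly $Ct^{-\frac{d}{2}(\frac1p-\frac1q)}$, whereas the paper's explicit choice gives two different powers of $t$ whose $\theta$-weighted geometric mean still recombines to the stated exponent; your variant is arguably the cleaner of the two and also covers $p=q$ explicitly.
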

\begin{proof}
For $1 < p < q$ there exist the numbers $1 < p_1 < p < p_2 < q$, $p<p_2 = q_1 < q< q_2$ and $0<\hat\theta<1$ such that 
$$\dfrac{1}{p} = \dfrac{1-\hat{\theta}}{p_1} + \dfrac{\hat{\theta}}{p_2} \hbox{  and  } \frac{1}{q} = \frac{1-\hat\theta}{q_1} + \frac{\hat\theta}{q_2}.$$
For example we can choose
$$\hat\theta= \frac{1}{2},\,\, p_1 = \frac{p(p+q)}{2q},\,\, p_2 = r_1 = \frac{p+q}{2},\,\,\, q_2 = \frac{q(p+q)}{2p}.$$
We have that
$$(L^{p_1}(\r^d),L^{p_2}(\r^d))_{\hat\theta,r} = L^{p,r}(\r^d) \hbox{   and   } (L^{q_1}(\r^d),L^{q_2}(\r^d))_{\hat\theta,r} = L^{q,r}.$$
Therefore, by using inequality \eqref{API0} and applying Theorem \ref{GIT} we obtain that
$$\norm{e^{-tA}x}_{L^{q,r}(\mathbb{R}^d)} \leq \left(C t^{-\frac{d}{2}\left( \frac{1}{p} - \frac{1}{q} \right)} \right)^{1-\hat\theta}\left(C t^{-\frac{d}{2}\left( \frac{1}{p} - \frac{1}{q} \right)} \right)^{\hat\theta} \norm{x}_{L^{p,r}(\mathbb{R}^d)} =C t^{-\frac{d}{2}\left( \frac{1}{p} - \frac{1}{q} \right)}\norm{x}_{L^{p,r}(\mathbb{R}^d)} .$$
Our proof is completed.
\end{proof}

As a consequence of Inequality \eqref{API} we have that $e^{-tA}B$, where $B=\mathrm{Id}$ (the identity operator) satisfies Assumption \ref{A} and the dual operators $B'=\mathrm{Id}$ and $e^{-tA'}$ verify Assumption \ref{assum}. Indeed, we choose the interpolation spaces $X,\, Y_1$ and $Y_2$ as follows:
$$X:=L^{\frac{d(m-1)}{2m},\infty}(\mathbb{R}^d) = L^{\frac{d(m-1)}{d(m-1)-2m},1}(\mathbb{R}^d)', \, \, Y_1:= L^{\frac{2d(m-1)}{5-m},\infty}(\mathbb{R}^d),\,\,\, Y_2:= L^{\frac{2d(m-1)}{m+3},\infty}(\mathbb{R}^d),$$
where $m,d$ are choosen such that
$$\frac{m}{4m-1} > d \geqslant 3 \hbox{  and  } 5 > m > \frac{d}{d-2}.$$
We choose $\theta =1/2$, then
$$(Y_1,Y_2)_{\frac{1}{2},\infty} = L^{\frac{d(m-1)}{2},\infty}(\mathbb{R}^d) = Y.$$
The preduals $Z_1$ and $Z_2$ of $Y_1$ and $Y_2$ are given by
$$Z_1 = L^{\frac{2d(m-1)}{(2d+1)(m-1)-4},1}(\mathbb{R}^d) \hbox{  and  } Z_2 = L^{\frac{2d(m-1)}{(2d-1)(m-1)-4},1}(\mathbb{R}^d)$$
respectively.

Using the inequality \eqref{API}, we have that
$$\norm{e^{-tA}\psi}_{Y_1} \leq L t^{-\frac{5}{4}}\norm{\psi}_X,$$
$$\norm{e^{-tA}\psi}_{Y_2} \leq L t^{-\frac{3}{4}}\norm{\psi}_X.$$
Hence, $e^{-tA}B = e^{-tA}$ satisfies the estimates in Assumption \ref{A} with $\alpha_1 = \frac{5}{4}$, $\alpha_2= \frac{3}{4}$ and $\theta = \frac{1}{2}$.

Now we have
$$\frac{2m}{d(m-1)} = \frac{2}{d(m-1)} + \frac{2j}{d(m-1)} + \frac{2(m-1-j)}{d(m-1)}.$$
Therefore, for all $u,\, v \in B(0,\rho) = \left\{ v\in BC(\mathbb{R},Y)| \norm{v}_{\infty,Y} \leq \rho \right\}$ and $t\in \mathbb{R}$ by using weak H\"older inequality we obtain that
\begin{eqnarray*}
&&\norm{|u(t)|^{m-1}u(t) - |v(t)|^{m-1}v(t)}_X \leq \sum_{j=0}^{m-1} \norm{|u(t)-v(t)||u(t)|^j|v(t)|^{m-1-j}}_X \cr
&\leq& \norm{|v(t)|^{m-1}}_{\frac{d}{2},\infty}\norm{u(t)-v(t)}_Y + \norm{|u(t)|^{m-1}}_{\frac{d}{2},\infty}\norm{u(t)-v(t)}_Y\cr
&+& \sum_{j=1}^{m-2}\norm{|u(t)|^j}_{\frac{d(m-1)}{2j},\infty} \norm{|v(t)|^{m-1-j}}_{\frac{d(m-1)}{2(m-1-j)},\infty}\norm{u(t)-v(t)}_Y\cr
&\leq& \norm{v(t)}^{m-1}_{\frac{d(m-1)}{2},\infty}\norm{u(t)-v(t)}_Y + \norm{u(t)}^{m-1}_{\frac{d(m-1)}{2},\infty} \norm{u(t)-v(t)}_Y\cr
&&+ \sum_{j=1}^{m-2}\norm{u(t)}^j_{\frac{d(m-1)}{2},\infty} \norm{v(t)}^{m-1-j}_{\frac{d(m-1)}{2},\infty}\norm{u(t)-v(t)}_Y\cr
&=&  \sum_{j=0}^{m-1}\norm{u(t)}^j_Y \norm{v(t)}^{m-1-j}_Y\norm{u(t)-v(t)}_Y\cr
&\leq& m\rho^{m-1} \norm{u(t)-v(t)}_Y,
\end{eqnarray*}
where $\norm{.}_{\frac{d(m-1)}{2j},\infty}$, $\norm{.}_{\frac{d(m-1)}{2(m-1-j)},\infty}$ are denoted the norms on $L^{\frac{d(m-1)}{2j},\infty}(\mathbb{R}^d)$ and $L^{\frac{d(m-1)}{2(m-1-j)},\infty}(\mathbb{R}^d)$ respectively.
Hence $g(t,u) = |u(t)|^{m-1}u(t) + F(t)$ satisfies the Lipschitz condition in Assumption \ref{GAA} with $C=m\rho^{m-1}$. The boundeness of $\norm{g(t,0)}_{\infty,X}$ holds due to the boundedness of $F$.

Now, fix any number $r> \frac{d(m-1)}{2}$. Since $1<\frac{r}{r-1}<\frac{dr}{d(r-1)-2r}$, we can choose real numbers $q_1$ and $q_2$ such that 
$$1<q_1< \frac{r}{r-1}<q_2 < \frac{dr}{d(r-1)-2r},$$
and there exists $\tilde{\theta}\in (0;\, 1)$ such that $\frac{r-1}{r} = \frac{1-\tilde{\theta}}{q_1} + \frac{\tilde{\theta}}{q_2}$. Therefore, we can determine the spaces $Q_1,\,Q_2,\,K_1,\, K_2,\, Q$ and $T$ in Assumption \ref{assum} $i)$ as follows:
$$T:= L^{\frac{dr}{d+2r},\infty}(\mathbb{R}^d) = L^{\frac{dr}{d(r-1)-2r},1}(\mathbb{R}^d)',$$
$$Q_1:= L^{\frac{q_1}{q_1-1},\infty}(\mathbb{R}^d),\, K_1 := L^{q_1,1}(\mathbb{R}^d),$$
$$Q_2:= L^{\frac{q_2}{q_2-1},\infty}(\mathbb{R}^d),\, K_2 := L^{q_2,1}(\mathbb{R}^d),$$
$$Q=(Q_1,Q_2)_{\tilde{\theta},\infty} = L^{r,\infty}(\mathbb{R}^d).$$
Using the inequality \eqref{API}, we have that
$$\norm{e^{-tA}\psi}_{Q_1} \leq \tilde{M} e^{-\beta_1}\norm{\psi}_T,$$
$$\norm{e^{-tA}\psi}_{Q_2} \leq \tilde{M} e^{-\beta_2}\norm{\psi}_T,$$
where $\beta_j\, (j=1,2)$ are chosen as 
$$\beta_j = \frac{d}{2} \left( \frac{1}{q_j} - \frac{d(r-1)-2r}{dr}  \right).$$
Therefore, we have that $0<\beta_2<1<\beta_1$ and $1 = (1-\tilde{\theta})\beta_1 + \tilde{\theta}\beta_2$.\\

We choose $\gamma = \frac{1}{m-1} - \frac{d}{2r}>0$ and by using again the inequality \eqref{API} we have
$$\norm{e^{-tA}\psi}_Q \leq C t^{-\gamma}\norm{\psi}_Y.$$
Therefore, $e^{-tA}B=e^{-tA}$ and $e^{-tA}$ satisfy the estimates in Assumption \ref{assum} $i)$ and $ii)$ respectively.

Lastly, we have
$$\frac{d+2r}{dr} = \frac{1}{r} + \frac{2j}{d(m-1)} + \frac{2(m-1-j)}{d(m-1)}.$$
Therefore, for all $u, \, v\in B(0,\rho) \cap BC(\mathbb{R},Q) = \left\{ v\in BC(\r,Y)\cap BC(\mathbb{R},Q)| \norm{v}_{\infty,Y} \leq \rho \right\}$ by using weak H\"older inequality we obtain that
\begin{eqnarray*}
&&\norm{|u(t)|^{m-1}u(t) - |v(t)|^{m-1}v(t)}_T \leq \sum_{j=0}^{m-1} \norm{|u(t)-v(t)||u(t)|^j|v(t)|^{m-1-j}}_T \cr
&\leq&\norm{|v(t)|^{m-1}}_{\frac{d}{2},\infty}\norm{u(t)-v(t)}_Q + \norm{|u(t)|^{m-1}}_{\frac{d}{2},\infty}\norm{u(t)-v(t)}_Q\cr
&&+ \sum_{j=1}^{m-2}\norm{|u(t)|^j}_{\frac{d(m-1)}{2j},\infty} \norm{|v(t)|^{m-1-j}}_{\frac{d(m-1)}{2(m-1-j)},\infty}\norm{u(t)-v(t)}_Q\cr
&\leq&\norm{|v(t)|^{m-1}}_{\frac{d}{2},\infty}\norm{u(t)-v(t)}_Q + \norm{|u(t)|^{m-1}}_{\frac{d}{2},\infty}\norm{u(t)-v(t)}_Q\cr
&&+ \sum_{j=1}^{m-2}\norm{u(t)}^j_{\frac{d(m-1)}{2},\infty} \norm{v(t)}^{m-1-j}_{\frac{d(m-1)}{2},\infty}\norm{u(t)-v(t)}_Q\cr
&=&  \sum_{j=0}^{m-1}\norm{u(t)}^j_Y \norm{v(t)}^{m-1-j}_Y\norm{u(t)-v(t)}_Q\cr
&\leq& m\rho^{m-1} \norm{u(t)-v(t)}_Q.
\end{eqnarray*}
Hence, $g(t,u) = |u(t)|^{m-1}u(t) + F(t)$ satisfies the Lipschitz condition for $G$ in Assumption \ref{assum} $iii)$ with $C_2=m\rho^{m-1}$.

Applying the abstract results obtained in Theorem \ref{thm1}, Theorem \ref{thm2} and Theorem \ref{Stabilitytheorem} in the previous sections, we obtain the existence, uniqueness and stability for the pseudo almost periodic mild solution of equations \eqref{Rou2} and \eqref{LRou} in the following theorem:
\begin{theorem}\label{app}
Let $X=L^{\frac{d(m-1)}{2m},\infty}(\mathbb{R}^d)$, $Y=L^{\frac{d(m-1)}{2},\infty}(\mathbb{R}^d)$ (where $d\geq 2, m\geqslant 2,\, m\in \mathbb{N}$) and $F \in PAP(\mathbb{R},Y)$, then the following assertions hold.
\begin{itemize}
\item[$(i)$] The linear equation \eqref{LRou} has a unique pseudo almost periodic mild solution $u\in PAP(\r,Y)$ such that 
$$\norm{u(t)}_{Y}\leq \tilde{L}\norm{F}_{\infty,X},\,\,\, t>0$$
where $\tilde{L}$ is a positive constant. 
 
\item[$(ii)$] If  $\norm{F}_{\infty,X}$ and $\rho>0$ are small enough, then the semi-linear equation \eqref{Rou2} has a unique pseudo almost periodic mild solution $\hat{u}$ in the small ball $B^{PAP}(0,\rho)$ of the Banach space $PAP(\mathbb{R},Y)$.

\item[$(iii)$] The  above solution $\hat u$ is polynomial stable in the sense that for any other solution $u \in BC(\mathbb{R},Y)$ of \eqref{Rou2}, if $\norm{\hat{u}}_{\infty,Y}$ and $\norm{u(0)-\hat{u}(0)}_Y$ are small enough, then we have
$$\norm{u(t)-\hat{u}(t)}_{L^{r,\infty}(\mathbb{R}^d)}  \leqslant \frac{C}{t^{\frac{1}{m-1}-\frac{d}{2r}}},\,\,\, t>0,$$
where $r > \frac{d(m-1)}{2}$.
\end{itemize}
\end{theorem}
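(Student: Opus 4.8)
The plan is to deduce all three assertions directly from the abstract Theorems \ref{thm1}, \ref{thm2} and \ref{Stabilitytheorem}, once the concrete diffusion setting has been matched to the abstract framework. The only genuine work is to verify that the operator $-A = b\Delta$, together with $B = \mathrm{Id}$ and the Nemytskii operator $G(u)(t) = |u(t)|^{m-1}u(t) + F(t)$, satisfies Assumptions \ref{A}, \ref{GAA} and \ref{assum} for a suitable choice of Lorentz interpolation spaces. The single analytic input is the smoothing estimate \eqref{API} for the ultracontractive semigroup $e^{-tA}$, which follows from the Gaussian kernel bound \eqref{GaussEs} via \eqref{API0} and real interpolation; every subsequent estimate is obtained by specializing \eqref{API} and by applying the weak H\"older inequality in Lorentz spaces.

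For part $(i)$, the first step is to fix the triple $X$, $Y_1$, $Y_2$ as in the displayed choice, set $\theta = 1/2$, and use \eqref{API} to read off $\norm{e^{-tA}\psi}_{Y_1} \leq L t^{-5/4}\norm{\psi}_X$ and $\norm{e^{-tA}\psi}_{Y_2} \leq L t^{-3/4}\norm{\psi}_X$. This identifies $\alpha_1 = 5/4$ and $\alpha_2 = 3/4$, and one checks the compatibility condition $1 = (1-\theta)\alpha_1 + \theta\alpha_2$ demanded by Lemma \ref{2.3*}, together with $0 < \alpha_2 < 1 < \alpha_1$. The interpolation identity $(Y_1, Y_2)_{1/2,\infty} = L^{d(m-1)/2,\infty}(\r^d) = Y$ then fixes the solution space. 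With Assumption \ref{A} verified, Theorem \ref{thm1} immediately yields the unique PAP-mild solution of \eqref{LRou} and the bound $\norm{u(t)}_Y \leq \tilde{L}\norm{F}_{\infty,X}$.

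For part $(ii)$, the task is to verify Assumption \ref{GAA} for $G$. Here I would establish two things: first, that $G$ maps PAP-functions into PAP-functions, which reduces to stability of the PAP class under the polynomial nonlinearity $u \mapsto |u|^{m-1}u$ together with the standing hypothesis $F \in PAP(\r, Y)$; and second, the local Lipschitz estimate, which is the computation indicated in the text, splitting $|u|^{m-1}u - |v|^{m-1}v$ into $m$ terms and bounding each factor in the appropriate weak-Lorentz norm to obtain the constant $C = m\rho^{m-1}$. Choosing $\rho$ and $\norm{F}_{\infty,X}$ small makes both $C$ and $\norm{G(0)}_{\infty,X}$ small, so Theorem \ref{thm2} supplies the unique PAP-mild solution $\hat{u}$ in $B^{PAP}(0,\rho)$.

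For part $(iii)$, the plan is to produce the spaces $T, Q_1, Q_2, K_1, K_2$ and $Q = L^{r,\infty}(\r^d)$ required by Assumption \ref{assum}, for a fixed $r > d(m-1)/2$, and to verify \eqref{1est}, \eqref{2est} and the Lipschitz condition $iii)$ once more through \eqref{API} and weak H\"older, with the stability exponent $\gamma = \frac{1}{m-1} - \frac{d}{2r} > 0$. Once Assumption \ref{assum} holds with $C_2 = m\rho^{m-1}$ small, Theorem \ref{Stabilitytheorem} delivers the decay $\norm{u(t) - \hat{u}(t)}_{L^{r,\infty}(\r^d)} \leq C t^{-\gamma}$. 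The main obstacle throughout is not any individual estimate but the simultaneous bookkeeping of exponents: one must choose the Lorentz parameters so that all the interpolation identities hold and the decay rates land in the admissible ranges $0 < \alpha_2 < 1 < \alpha_1$, $0 < \beta_2 < 1 < \beta_1$ and $\gamma > 0$ at the same time. This is precisely what forces the structural constraints relating $m$ and $d$, and arranging these inequalities to be mutually consistent is the delicate part of the argument.
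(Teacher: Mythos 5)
Your proposal follows essentially the same route as the paper: establish the Lorentz-space smoothing estimate \eqref{API} by real interpolation from \eqref{API0}, verify Assumptions \ref{A}, \ref{GAA} and \ref{assum} with the same choices of spaces and exponents ($\alpha_1=5/4$, $\alpha_2=3/4$, $\theta=1/2$, $C=C_2=m\rho^{m-1}$, $\gamma=\frac{1}{m-1}-\frac{d}{2r}$) via the weak H\"older inequality, and then invoke Theorems \ref{thm1}, \ref{thm2} and \ref{Stabilitytheorem}. Your explicit flagging of the need to check that the polynomial nonlinearity preserves the PAP class is a small point the paper leaves implicit, but otherwise the two arguments coincide.
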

\begin{remark}
Our abstract results can be also applied to the fluid dynamic equations as in \cite{GeHiHu,HuyHaXuan}.
\end{remark}
\noindent
\noindent

\end{document}